\documentclass[12pt,a4paper]{article}
\usepackage{mathrsfs}
\usepackage{epsfig, graphicx}
\usepackage{latexsym,amsfonts,amsbsy,amssymb}
\usepackage{amsmath,amsthm}
\usepackage{color}
\usepackage[colorlinks, citecolor=blue]{hyperref}
\textwidth=15cm \textheight=22cm \topmargin 0 cm
\oddsidemargin 0in
\evensidemargin 0in \baselineskip= 12pt
\parindent=12pt
\parskip=3pt
\overfullrule=0pt

\makeatletter 

\@addtoreset{equation}{section}
\makeatother 
\allowdisplaybreaks 

\newtheorem{theorem}{Theorem}[section]
\newtheorem{lemma}{Lemma}[section]
\newtheorem{corollary}{Corollary}[section]
\newtheorem{remark}{Remark}[section]
\newtheorem{algorithm}{Algorithm}[section]

\begin{document}
\title{A Cascadic Multigrid Method for GPE
Problem\footnote{This work is supported in part by the National
Science Foundation of China (NSFC 91330202, 11371026, 11001259,
11031006, 2011CB309703), the National Center for Mathematics
and Interdisciplinary Science of CAS Chinese Academy of Sciences
 and the President Foundation of Academy of Mathematics and
 Systems Science-Chinese Academy of Sciences.}
}
\author{Xiaole Han\footnote{LSEC, ICMSEC, Academy of Mathematics and Systems
Science, Chinese Academy of Sciences,  Beijing 100190, China
(hanxiaole@lsec.cc.ac.cn)}\ \ \
 and\ \ Hehu Xie\footnote{LSEC, ICMSEC,
Academy of Mathematics and Systems Science, Chinese Academy of
Sciences,  Beijing 100190, China (hhxie@lsec.cc.ac.cn)}
}
\date{}
\maketitle

\begin{abstract}
{
A cascadic multigrid method is proposed for the GPE problem based on the
multilevel correction scheme. With this new scheme, the ground state
eigenvalue problem on the finest space
can be solved by smoothing steps on a series of multilevel finite element spaces and some 
nonlinear eigenvalue problem solving on a very low-dimensional space.
 Choosing the appropriate sequence of finite element spaces and the number of
 smoothing steps, the optimal convergence rate with the optimal computational
 work can be arrived. Some numerical experiments are presented to validate our theoretical analysis.
}
\vskip0.3cm {\bf Keywords.} Bose-Einstein condensation; Gross-Pitaevskii equation; multilevel correction; 
 cascadic multigrid; nonlinear eigenvalue problem; finite element method.

\vskip0.2cm {\bf AMS subject classifications.} 65N30, 65N25, 65L15,
65B99.
\end{abstract}

\section{Introduction}\label{sec;introduction}
The aim of this paper is to design a cascadic type multigrid finite element
method for solving Gross-Pitaevskii equation (GPE) which is a time
independent Schr\"{o}dinger equation.
The finite element method for GPE problem and general semilinear eigenvalue
problem has been investigated by \cite{Maday2010,Zhou2004}. The corresponding
error estimates are also given.

Recently, a type of multilevel correction method is proposed to solve
eigenvalue problems in \cite{LinXie_MultiLevel,Xie_JCP,Xie_IMA}.
In this multilevel correction scheme, the solution of eigenvalue problem
on the final level mesh can be reduced to a series of solutions of boundary
value problems on the multilevel meshes and a series of solutions of the
eigenvalue problem in a very low-dimensional space. Therefore, the cost of computation work
can be reduced largely. A multigrid method for the GPE has been proposed
 in \cite{XieXie2014} where a superapproximate property is founded.
Therefore, the aim of this paper is to construct a cascadic multigrid method
to solve the ground state solution of Bose-Eienstein condensates (BEC). The
cascadic multigrid method for second order elliptic eigenvalue problem
is given in \cite{HanXie2014Cascadic}.
This method transforms the eigenvalue problem solving to a series
of smoothing iteration steps on the sequence of meshes and eigenvalue
problem solving on the coarsest mesh by the multilevel correction method.
Similarly to the cascadic multigrid for the boundary value problem
\cite{BornemannDeuflhard1996,Shaidurov1996CCG}, we only do
the smoothing steps for the involved boundary value problems by using the previous
eigenpair approximation as the start value and the numbers of smoothing iteration
steps need to be increased in the coarse levels.
The order of the algebraic error for the final eigenpair approximation can arrive the same
as the discretization error of the finite element method by organizing suitable
numbers of smoothing iteration steps in different levels. The nonlinear eigenvalue
problems on a very low-dimensional space are solved by self-consistent iteration or
Newton type iteration which reduces the nonlinear eigenvalue problem to a series of linear ones.

The rest of this paper is organized as follows. In the next section, we introduce
the finite element method for the ground state solution of BEC.
A cascadic multigrid method for solving the non-dimensionalized GPE is presented
and analyzed in Section 3. In Section 4, some numerical tests are presented to
validate our theoretical analysis. Some concluding remarks are given in the last section.

\section{Finite element method for GPE problem}\label{sec;preliminary}
This section is devoted to introducing some notation and the finite element
method for the GPE problem. In this paper, we shall use the standard notation
for Sobolev spaces $W^{s,p}(\Omega)$ and their
associated norms and semi-norms (cf. \cite{Adams1975}). For $p=2$, we denote
$H^s(\Omega)=W^{s,2}(\Omega)$ and $H_0^1(\Omega)=\{v\in H^1(\Omega):\ v|_{\partial\Omega}=0\}$,
where $v|_{\Omega}=0$ is in the sense of trace, $\|\cdot\|_{s,\Omega}=\|\cdot\|_{s,2,\Omega}$.
The letter $C$ (with or without subscripts) denotes a generic
positive constant which may be different at its different occurrences through the paper.

For simplicity, we consider the following non-dimensionalized GPE problem:
Find $(\lambda, u)$ such that
\begin{equation}\label{GPE_Problem}
\left\{
\begin{array}{rcl}
-\Delta u + Wu + \zeta |u|^2u &=&\lambda u, \quad {\rm in} \  \Omega,\\
u&=&0, \ \ \quad {\rm on}\  \partial\Omega, \\
\int_{\Omega}|u|^2 d\Omega&=& 1,
\end{array}
\right.
\end{equation}
where $\Omega\subset\mathcal{R}^d$ $(d=2,3)$ is a bounded domain with
Lipschitz boundary $\partial\Omega$, $\zeta$ is some positive constant and
 $W(x)=\gamma_1 x_1^2 + \cdots + \gamma_d x_d^2\ge 0$ with $\gamma_1,\cdots,\gamma_d>0$.

In order to use the finite element method to solve
the eigenvalue problem (\ref{GPE_Problem}), we need to define
the corresponding variational form as follows:
Find $(\lambda, u )\in \mathcal{R}\times V$ such that $b(u,u)=1$ and
\begin{eqnarray}\label{Weak_GPE_Problem}
a(u,v)&=&\lambda b(u,v),\quad \forall v\in V,
\end{eqnarray}
where $V:=H_0^1(\Omega)$ and
\begin{equation}
a(u,v):=\int_{\Omega}\big(\nabla u\nabla v + Wuv + \zeta |u|^2uv\big)d\Omega,
 \ \ b(u,v):= \int_{\Omega}uv d\Omega.
\end{equation}
The existence, uniqueness and simplicity of the smallest eigenpair
of eigenvalue problem (\ref{Weak_GPE_Problem})
have been given in \cite{Maday2010}.

To simplify the notation, we also define $H^1(\Omega)$ inner-product $\widehat{a}(\cdot,\cdot)$ as
\begin{equation}\label{a_hat_def}
  \widehat{a}(w,v):=\int_{\Omega}\nabla w\nabla v d\Omega, \ \ \forall w,v\in V.
\end{equation}


Now, let us define the finite element approximations of the problem
(\ref{Weak_GPE_Problem}). First we generate a shape-regular
decomposition of the computing domain $\Omega\subset \mathcal{R}^d\
(d=2,3)$ into triangles or rectangles for $d=2$ (tetrahedrons or
hexahedrons for $d=3$). The diameter of a cell $K\in\mathcal{T}_h$
is denoted by $h_K$ and the mesh size $h$ describes  the maximum diameter of all cells
$K\in\mathcal{T}_h$.
Based on the mesh $\mathcal{T}_h$, we can construct a finite element space denoted by
 $V_h \subset V$. For simplicity, we set $V_h$ as the linear finite
 element space which is defined as follows
\begin{equation}\label{linear_fe_space}
  V_h = \big\{ v_h \in C(\Omega)\ \big|\ v_h|_{K} \in \mathcal{P}_1,
  \ \ \forall K \in \mathcal{T}_h\big\},
\end{equation}
where $\mathcal{P}_1$ denotes the linear function space.

The standard finite element scheme for eigenvalue problem (\ref{Weak_GPE_Problem}) is:
Find $(\bar{\lambda}_h, \bar{u}_h)\in \mathcal{R}\times V_h$
such that $b(\bar{u}_h,\bar{u}_h)=1$ and
\begin{eqnarray}\label{Weak_GPE_Discrete}
a(\bar{u}_h,v_h)&=&\bar{\lambda}_h b(\bar{u}_h,v_h),\quad\ \  \ \forall v_h\in V_h.
\end{eqnarray}
Then we define
\begin{equation}\label{delta_h_u_def}
  \delta_h(u):=\inf_{v_h\in V_h}\|u-v_h\|_1.
\end{equation}

\begin{lemma}(\cite[Theorem 1]{Maday2010})\label{Err_Ground_Eigen_Lem}
There exists $h_0>0$, such that for all $0<h<h_0$, the smallest eigenpair
approximation $(\bar{\lambda}_h,\bar{u}_h)$ of (\ref{Weak_GPE_Discrete})
having the following error estimates
\begin{eqnarray}
\|u-\bar{u}_h\|_1 &\le& C\delta_h(u), \\
\|u-\bar{u}_h\|_0 &\le& C\eta_a(V_h)\|u-\bar{u}_h\|_1\le C\eta_a(V_h)\delta_h(u), \\
|\lambda - \bar{\lambda}_h| &\le& C\big(\|u-\bar{u}_h\|_1^2
+ \|u-\bar{u}_h\|_0 \big) \le C\eta_a(V_h)\delta_h(u),
\end{eqnarray}
where $\eta_a(V_h)$ is defined as follows
\begin{equation}\label{eta_a_V_h_def}
\eta_a(V_h) = \sup_{f\in L^2(\Omega),\|f\|_0=1}\inf_{v_h\in V_h}\|Tf-v_h\|_1
\end{equation}
with the operator $T$ being defined as follows:
Find $Tf\in u^{\bot}$ such that
\begin{equation*}
a(Tf,v)+ 2\big(\zeta|u|^2(Tf),v\big)-\big(\lambda(Tf),v\big)=(f,v),\ \ \forall v\in u^{\bot},
\end{equation*}
where $u^{\bot}=\{v\in H_0^1(\Omega):|\int_{\Omega}uv d\Omega=0\}$.
Here we use the fact that $\delta_h(u) \le C\eta_a(V_h)$.
\end{lemma}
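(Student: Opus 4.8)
The estimate is the a priori finite element error analysis of the semilinear eigenvalue problem (\ref{Weak_GPE_Problem}), and I would follow the standard route: first establish qualitative $H^1$ convergence of $\bar u_h$ to $u$, then upgrade it to $H^1$ quasi-optimality using the variational characterization of the ground state, then obtain the $L^2$ rate by an Aubin--Nitsche duality argument built on the operator $T$, and finally read off the eigenvalue estimate from a direct algebraic expansion. Throughout write $e:=u-\bar u_h$, let $E(v):=\tfrac{1}{2}\widehat a(v,v)+\tfrac{1}{2}\int_\Omega W|v|^2\,d\Omega+\tfrac{\zeta}{4}\int_\Omega|v|^4\,d\Omega$ be the Gross--Pitaevskii energy, and let $\mathcal B(w,v):=\widehat a(w,v)+\int_\Omega(W+3\zeta|u|^2)wv\,d\Omega-\lambda b(w,v)$ be the linearization of (\ref{Weak_GPE_Problem}) at $u$, so that by construction $T$ is the solution operator of $\mathcal B(\cdot,\cdot)=(f,\cdot)$ on $u^{\bot}$. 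Then $u$ minimizes $E$ over $\{v\in V:\,b(v,v)=1\}$, $\bar u_h$ minimizes it over the same set intersected with $V_h$, and (\ref{Weak_GPE_Problem}) and (\ref{Weak_GPE_Discrete}) are the corresponding Euler--Lagrange equations with multipliers $\lambda$ and $\bar\lambda_h$. Since $E$ is coercive on the $L^2$-sphere (by Poincar\'e, $E(v)\ge c\|v\|_1^2-C$ there) and weakly lower semicontinuous, while $\cup_h V_h$ is dense in $V$, a standard compactness argument (using the compact embeddings $H^1\hookrightarrow L^2,L^4$ for $d\le 3$ and the simplicity of the minimizer) gives $\|u-\bar u_h\|_1\to 0$; this qualitative fact, already contained in \cite{Maday2010}, is what legitimizes the ``smallness'' absorptions below once $h$ is small enough.

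\emph{The $H^1$ estimate.} The simplicity of the smallest eigenvalue $\lambda$ yields a spectral gap for $-\Delta+W+\zeta|u|^2-\lambda$ on $u^{\bot}$; adding the nonnegative term $2\zeta|u|^2$ preserves it, so $\mathcal B$ is $H^1$-coercive on $u^{\bot}$ (equivalently $T$ is well defined and bounded from $L^2(\Omega)$ into $u^{\bot}\subset H^1$). A Taylor expansion of $E$ at the constrained minimizer $u$, in which the first-order term is $-\tfrac{\lambda}{2}\|w-u\|_0^2$ and the $\lambda$-part of the Hessian cancels it, then gives the local two-sided estimate $c\|w-u\|_1^2\le E(w)-E(u)\le C\|w-u\|_1^2$ for $w$ on the $L^2$-sphere near $u$. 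With $v_h\in V_h$ realizing $\delta_h(u)$ and $\tilde v_h:=v_h/\|v_h\|_0$ (so $\|\tilde v_h-u\|_1\le C\delta_h(u)$), for $h$ small one gets
\[
c\|u-\bar u_h\|_1^2\le E(\bar u_h)-E(u)\le E(\tilde v_h)-E(u)\le C\|u-\tilde v_h\|_1^2\le C\delta_h(u)^2,
\]
hence $\|u-\bar u_h\|_1\le C\delta_h(u)$; since $\delta_h(u)\le C\eta_a(V_h)$ we also record $\|e\|_1\le C\eta_a(V_h)$, to serve as the absorbing smallness.

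\emph{The $L^2$ estimate.} Split $e=e^{\bot}+\alpha u$ with $e^{\bot}\in u^{\bot}$; from $\|u\|_0=\|\bar u_h\|_0=1$ one gets $\alpha=b(e,u)=\tfrac{1}{2}\|e\|_0^2$, whence $\|e\|_0\le\sqrt2\,\|e^{\bot}\|_0$ and $\|e^{\bot}\|_1\le C\|e\|_1$ for $h$ small. Put $w:=Te^{\bot}\in u^{\bot}$, so $\|w\|_0\le C\|e^{\bot}\|_0$, and let $w_h\in V_h$ be a best $H^1$-approximation of $w$, so $\|w-w_h\|_1\le\eta_a(V_h)\|e^{\bot}\|_0$ by (\ref{eta_a_V_h_def}). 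Using the symmetry of $\mathcal B$,
\[
\|e^{\bot}\|_0^2=\mathcal B(w,e^{\bot})=\mathcal B(e,w)-\tfrac{1}{2}\|e\|_0^2\,\mathcal B(u,w),
\]
where $\mathcal B(u,w)=2\zeta\int_\Omega|u|^2uw\,d\Omega=O(\|e^{\bot}\|_0)$ is absorbed. In $\mathcal B(e,w)$ insert $w_h$ and use (\ref{Weak_GPE_Problem}) and (\ref{Weak_GPE_Discrete}) to cancel the principal parts; the residue consists of (a) a consistency term bounded by $C\|e\|_1\|w-w_h\|_1\le C\eta_a(V_h)\|e\|_1\|e^{\bot}\|_0$, (b) the cubic defect $\zeta\int_\Omega(2u^3-3u^2\bar u_h+\bar u_h^3)w\,d\Omega=\zeta\int_\Omega e^2(\bar u_h+2u)w\,d\Omega$, bounded via H\"older and $H^1\hookrightarrow L^6$ by $C\|e\|_1^2\|e^{\bot}\|_0$, and (c) an eigenvalue-defect term $(\lambda-\bar\lambda_h)\,b(\bar u_h,w_h)$, in which $b(\bar u_h,w_h)=-b(e,w)+O(\|w-w_h\|_0)=O\big((\|e\|_0+\eta_a(V_h))\|e^{\bot}\|_0\big)$, so that with the crude bound $|\lambda-\bar\lambda_h|\le C(\|e\|_0+\|e\|_1^2)$ of the next step it too carries an extra factor among $\{\|e\|_1,\|e^{\bot}\|_0,\eta_a(V_h)\}$. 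Using $\|e\|_1\le C\eta_a(V_h)$ and $\|e\|_0\le\sqrt2\|e^{\bot}\|_0$, every non-(a) contribution is absorbed into $\tfrac{1}{2}\|e^{\bot}\|_0^2$, and dividing by $\|e^{\bot}\|_0$ gives $\|u-\bar u_h\|_0\le C\eta_a(V_h)\|u-\bar u_h\|_1\le C\eta_a(V_h)\delta_h(u)$.

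\emph{The eigenvalue estimate and the main obstacle.} Since $b(u,u)=b(\bar u_h,\bar u_h)=1$, taking $v=u$ in (\ref{Weak_GPE_Problem}) and $v_h=\bar u_h$ in (\ref{Weak_GPE_Discrete}) gives $\lambda=\widehat a(u,u)+\int_\Omega(Wu^2+\zeta u^4)\,d\Omega$ and likewise $\bar\lambda_h=\widehat a(\bar u_h,\bar u_h)+\int_\Omega(W\bar u_h^2+\zeta\bar u_h^4)\,d\Omega$. Writing $\bar u_h=u-e$, expanding, and collecting the terms linear in $e$ by means of $a(u,\cdot)=\lambda b(u,\cdot)$, one finds
\[
\bar\lambda_h-\lambda=-2\zeta\int_\Omega|u|^2u\,e\,d\Omega-\lambda\|e\|_0^2+R,\qquad |R|\le C\|e\|_1^2,
\]
so $|\lambda-\bar\lambda_h|\le C(\|e\|_0+\|e\|_1^2)$; inserting the two preceding estimates and $\delta_h(u)\le C\eta_a(V_h)$ yields $|\lambda-\bar\lambda_h|\le C\eta_a(V_h)\delta_h(u)$. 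I expect the two facts used in the second and third steps to be the main obstacle: the local quadratic growth of $E$ at $u$ --- equivalently the coercivity of $\mathcal B$, i.e. the well-posedness and boundedness of $T$, on $u^{\bot}$ --- which rests on the \emph{simplicity} of the ground-state eigenvalue, and the precise bookkeeping of the cubic nonlinearity in the duality identity, ensuring that every nonlinear remainder is genuinely of higher order. The latter uses the Sobolev embeddings $H^1(\Omega)\hookrightarrow L^4(\Omega),L^6(\Omega)$ for $d\le 3$ together with the a priori smallness $\|u-\bar u_h\|_1\to 0$, which is precisely why the qualitative convergence has to be established first.
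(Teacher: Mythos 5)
The paper gives no proof of this lemma---it is quoted directly from \cite[Theorem 1]{Maday2010}---so the only meaningful comparison is with that reference. Your argument is a correct reconstruction of the proof given there: $H^1$ quasi-optimality from the two-sided quadratic growth of the Gross--Pitaevskii energy at the constrained minimizer (resting on coercivity of the linearized form $\mathcal B$ on $u^{\bot}$, hence on the simplicity of the ground state), the $L^2$ rate by duality with $T$ after splitting off the $\tfrac12\|u-\bar u_h\|_0^2\,u$ component of the error, and the eigenvalue bound by direct expansion of the Rayleigh identity; the remainder terms you track (the cubic defect $\zeta\int_\Omega e^2(\bar u_h+2u)w\,d\Omega$, the term $(\lambda-\bar\lambda_h)b(\bar u_h,w_h)$, and $\alpha\,\mathcal B(u,w)$) are indeed all of higher order and absorbable once the qualitative convergence $\|u-\bar u_h\|_1\to0$ is in hand.
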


\section{Cascadic multigrid method for GPE}
Recently, a multilevel correction scheme is introduced in
\cite{LinXie_MultiLevel,Xie_JCP,Xie_IMA} for solving Laplace eigenvalue problems.
Based on their involved idea, we propose a type of cascadic multigrid method
for GPE problem (\ref{Weak_GPE_Problem}) in this paper.
 The main idea in this method is to approximate the underlying boundary value
problems on each level by some simple smoothing iteration steps.
In order to describe the cascadic multigrid method, we first introduce
the sequence of finite element
spaces and the properties of the concerned smoothers.

In order to design multigrid scheme, we first generate a coarse mesh $\mathcal{T}_H$
with the mesh size $H$ and the coarse linear finite element space $V_H$ is
defined on it. Then we define a
sequence of triangulations $\mathcal{T}_{h_k}$
of $\Omega\subset \mathcal{R}^d$ determined as follows.
Suppose $\mathcal{T}_{h_1}$ (produced from $\mathcal{T}_H$ by
regular refinements) is given and let $\mathcal{T}_{h_k}$ be obtained
from $\mathcal{T}_{h_{k-1}}$ via one regular refinement step
(produce $\beta^d$ subelements) such that
\begin{eqnarray}\label{mesh_size_recur}
h_k\approx\frac{1}{\beta}h_{k-1},
\end{eqnarray}
where the positive number $\beta$ denotes the refinement index and larger
 than $1$ (always equals $2$).
Based on this sequence of meshes, we construct the corresponding nested
 linear finite element spaces such that
\begin{eqnarray}\label{FEM_Space_Series}
V_{H}\subseteq V_{h_1}\subset V_{h_2}\subset\cdots\subset V_{h_n}.
\end{eqnarray}
The sequence of finite element spaces $V_{h_1}\subset V_{h_2}\subset\cdots\subset V_{h_n}$
 and the finite element space $V_H$ have  the following relations
of approximation accuracy
\begin{eqnarray}\label{delta_recur_relation}
\eta_a(V_H)\gtrsim\delta_{h_1}(u),\ \ \ \
\delta_{h_k}(u)\approx\frac{1}{\beta}\delta_{h_{k-1}}(u),\ \ \ k=2,\cdots,n.
\end{eqnarray}
In fact, since the ground eigenvalue $\lambda$ of (\ref{Weak_GPE_Problem}) is
simple (see \cite{Maday2010}) and the computing domain is convex,
 we have the following estimates
\begin{eqnarray}\label{Estimates_Eta_Delta_Hh}
\eta_a(V_H)\approx H,\ \ \ \ \eta_a(V_{h_k})\approx h_k
\ \ \ \  {\rm and}\ \ \ \ \delta_{h_k}(u)\approx h_k,\ \ \ \ k=1,\cdots,n.
\end{eqnarray}

\begin{remark}
The relation (\ref{delta_recur_relation}) is reasonable since we can choose
$\delta_{h_k}(u)=h_k\ (k=1,\cdots,n)$. Always the upper bound of
the estimate $\delta_{h_k}(u)\lesssim h_k$ holds. Recently, we also obtain the
lower bound result $\delta_{h_k}(u)\gtrsim h_k$ (c.f. \cite{LinXieXu2014}).
\end{remark}
For generality, we introduce a smoothing operator $S_h: V_h\rightarrow V_h$
which satisfies the following estimates 
\begin{equation}\label{smoothing_property}
\left\{
\begin{array}{rcl}
\|S_h^m w_h\|_1 &\le& \frac{C}{m^{\alpha}}\frac{1}{h}\|w_h\|_0, \\
\|S_h^m w_h\|_1 &\le& \|w_h\|_1, \\
\|S_h^m (w_h + v_h)\|_1 &\le&\|S_h^m w_h\|_1 + \|S_h^m v_h\|_1,
\end{array}
\right.
\end{equation}
where $C$ is a constant independent of $h$ and $\alpha$
 is some positive number depending on the choice of smoother.
It is proved in \cite{Hackbusch1985,shaidurov1995multigrid,wang2004basic}
 that the symmetric Gauss-Seidel,
the SSOR, the damped Jacobi and the Richardson iteration are smoothers in the sense
 of (\ref{smoothing_property}) with parameter $\alpha=1/2$ and the
 conjugate-gradient iteration is the
 smoother with $\alpha=1$ (cf. \cite{Shaidurov1996CCG,ShaidurovTobiska2000}).

Then we define the following notation
\begin{equation}\label{smooth_process_Vh}
w_h = {\it Smooth}(V_h, f, \xi_h, m, S_h)
\end{equation}
as the smoothing process for the following boundary value problem
\begin{equation}\label{fe_source}
\widehat{a}(u_h,v_h) = b(f,v_h), \ \ \ \ \forall v_h \in V_h,
\end{equation}
where $\xi_h$ denote the initial value of the smoothing process, $S_h$ denote the chosen
smoothing operator, $m$ the number of the iteration steps and $w_h$ is
the output of the smoothing process.

Now, we come to introduce the cascadic multigrid method for the eigenvalue problem
(\ref{Weak_GPE_Problem}).
Assume we have obtained an eigenpair approximations
 $(\lambda^{h_k}, u^{h_k}) \in \mathcal{R}\times V_{h_k}$.
We design the following cascadic type one correction step to improve the accuracy of the
current eigenpair approximation  $(\lambda^{h_k}, u^{h_k}) \in \mathcal{R}\times V_{h_k}$.
\begin{algorithm}\label{Smooth_Correction_Alg}
Cascadic type of One Correction Step
\begin{enumerate}
\item Define the following auxiliary source problem:
Find $\widehat{u}^{h_{k+1}}\in V_{h_{k+1}}$ such that
\begin{eqnarray}\label{correct_source_exact_cas}
&&\widehat{a}(\widehat{u}^{h_{k+1}},v_{h_{k+1}}) = \lambda^{h_{k}}b(u^{h_{k}},v_{h_{k+1}})\nonumber\\
 && \quad\quad\quad - \big((W+\zeta|u^{h_k}|^2)u^{h_k},v_{h_{k+1}}\big),
\ \ \forall v_{h_{k+1}}\in V_{h_{k+1}}.
\end{eqnarray}
Perform the smoothing process (\ref{smooth_process_Vh}) to obtain a new eigenfuction
 approximation $\widetilde{u}^{h_{k+1}}\in V_{h_{k+1}}$ by
\begin{equation}\label{smooth_correct_process}
\widetilde{u}^{h_{k+1}} = {\it Smooth}(V_{h_{k+1}},
\lambda^{h_{k}}u^{h_{k}}-(W+\zeta|u^{h_k}|^2)u^{h_k},
 u^{h_{k}}, m_{k+1}, S_{h_{k+1}}).
\end{equation}

\item Define a new finite element space $V_{H}^{h_{k+1}} = V_H +
{\rm span}\{\widetilde{u}^{h_{k+1}}\}$ and solve the following eigenvalue problem:
Find $(\lambda^{h_{k+1}},u^{h_{k+1}})\in \mathcal{R}\times V_{H}^{h_{k+1}}$
such that\\ $b(u^{h_{k+1}},u^{h_{k+1}})=1$ and
\begin{equation}\label{cascadic_correct_eig_exact}
a(u^{h_{k+1}},v_{H}^{h_{k+1}}) = \lambda^{h_{k+1}}b(u^{h_{k+1}},v_{H}^{h_{k+1}}),
\ \ \ \ \ \forall v_{H}^{h_{k+1}}\in V_{H}^{h_{k+1}}.
\end{equation}
\end{enumerate}
Summarize the above two steps by defining
\begin{eqnarray*}
(\lambda^{h_{k+1}},u^{h_{k+1}}) =
{\it SmoothCorrection}(V_H,V_{h_{k+1}},\lambda^{h_{k}},u^{h_{k}},m_{k+1},S_{h_{k+1}}).
 \end{eqnarray*}
\end{algorithm}

Based on the above algorithm, i.e., the cascadic type of one correction step, we can
construct a cascadic multigrid method for GPE as follows:

\begin{algorithm}\label{Cascadic_MCS_Alg}
GPE Cascadic Multigrid Method
\begin{enumerate}
\item Solve the following GPE problem in the initial finite element space $V_{h_1}$:
Find $(\lambda^{h_1}, u^{h_1})\in \mathcal{R}\times V_{h_1}$ such that
\begin{equation*}
a(u^{h_1}, v_{h_1}) = \lambda^{h_1} b(u^{h_1}, v_{h_1}), \quad \forall v_{h_1}\in  V_{h_1}.
\end{equation*}
\item For $k=1,\cdots,n-1$, do the following iteration
\begin{eqnarray*}
(\lambda^{h_{k+1}},u^{h_{k+1}}) =
{\it SmoothCorrection}(V_H,V_{h_{k+1}},\lambda^{h_{k}},u^{h_{k}},m_{k+1},S_{h_{k+1}}).
\end{eqnarray*}
End Do
\end{enumerate}
Finally, we obtain an eigenpair approximation
$(\lambda^{h_n},u^{h_n}) \in \mathcal{R}\times V_{h_n}$.
\end{algorithm}

\vspace{1ex} 
In order to analyze the convergence of Algorithm \ref{Cascadic_MCS_Alg},
 we introduce an auxiliary algorithm and then show its superapproximate property.
Similarly, assume we have obtained an eigenpair approximations
 $(\widetilde{\lambda}_{h_k}, \widetilde{u}_{h_k}) \in \mathcal{R}\times V_{h_k}$.
The following auxiliary one correction step is defined as follows.
\begin{algorithm}\label{Auxiliary_Correction_Alg}
Auxiliary One Correction Step
\begin{enumerate}
\item Solve the following auxiliary source problem:
Find $\widehat{u}_{h_{k+1}}\in V_{h_{k+1}}$ such that
\begin{eqnarray}\label{Auxiliary_correct_source_exact}
&&\widehat{a}(\widehat{u}_{h_{k+1}},v_{h_{k+1}})
= \widetilde{\lambda}_{h_k}b(\widetilde{u}_{h_k},v_{h_{k+1}})\nonumber\\
&&\quad\quad\quad
-\big(\big(W+\zeta|\widetilde{u}_{h_k}|^2\big)\widetilde{u}_{h_k},v_{h_{k+1}}\big),
\ \ \forall v_{h_{k+1}}\in V_{h_{k+1}}.
\end{eqnarray}
\item Define a new finite element space
$\widetilde{V}_{H,h_{k+1}} = V_H + {\rm span}\{\widehat{u}_{h_{k+1}}\}
 + {\rm span}\{\widetilde{u}^{h_{k+1}}\}$
 and solve the following eigenvalue problem:
Find $(\widetilde{\lambda}_{h_{k+1}},\widetilde{u}_{h_{k+1}})\in
 \mathcal{R}\times \widetilde{V}_{H,h_{k+1}}$ such that
  $b(\widetilde{u}_{h_{k+1}},\widetilde{u}_{h_{k+1}})=1$ and
\begin{equation}\label{Auxiliary_correct_eig_exact}
a(\widetilde{u}_{h_{k+1}},\widetilde{v}_{H,h_{k+1}}) = \widetilde{\lambda}_{h_{k+1}}b(\widetilde{u}_{h_{k+1}},\widetilde{v}_{H,h_{k+1}}),
\ \ \ \ \ \forall \widetilde{v}_{H,h_{k+1}}\in \widetilde{V}_{H,h_{k+1}}.
\end{equation}
\end{enumerate}
Summarize the above two steps by defining
\begin{eqnarray*}
(\widetilde{\lambda}_{h_{k+1}},\widetilde{u}_{h_{k+1}}) =
{\it AuxiliaryCorrection}(V_H,V_{h_{k+1}},\widetilde{\lambda}_{h_{k}},\widetilde{u}_{h_{k}},
\widetilde{u}^{h_{k+1}}).
 \end{eqnarray*}
\end{algorithm}

\begin{algorithm}\label{Auxiliary_MCS_Alg}
GPE Auxiliary Multilevel Correction Method
\begin{enumerate}
\item Solve the following GPE problem in the initial finite element space $V_{h_1}$:
Find $(\widetilde{\lambda}_{h_1}, \widetilde{u}_{h_1})\in \mathcal{R}\times V_{h_1}$ such that
\begin{equation*}
a(\widetilde{u}_{h_1}, v_{h_1}) = \widetilde{\lambda}_{h_1} b(\widetilde{u}_{h_1}, v_{h_1}),
\quad \forall v_{h_1}\in  V_{h_1}.
\end{equation*}
\item For $k=1,\cdots,n-1$, do the following iteration
\begin{eqnarray*}
(\widetilde{\lambda}_{h_{k+1}},\widetilde{u}_{h_{k+1}}) =
{\it AuxiliaryCorrection}(V_H,V_{h_{k+1}},\widetilde{\lambda}_{h_{k}},\widetilde{u}_{h_{k}},
\widetilde{u}^{h_{k+1}}).
 \end{eqnarray*}
 End Do
 \end{enumerate}
  Finally, we obtain an eigenpair approximation
$(\widetilde{\lambda}_{h_{n}},\widetilde{u}_{h_{n}})
\in \mathcal{R}\times V_{h_{n}}$.
\end{algorithm}
\vspace{1ex}
Before analyzing the convergence of Algorithm \ref{Cascadic_MCS_Alg},
we show a superapproximate
property of $\widetilde{u}_{h_k}$ obtained by Algorithm \ref{Auxiliary_MCS_Alg}. The similar
result is also analyzed in \cite{XieXie2014}.
\begin{theorem}\label{super_approx_thm}
Assume $\widetilde{u}_{h_k}$ ($k=1,\cdots,n$) are obtained by Algorithm \ref{Auxiliary_MCS_Alg}
and $\bar{u}_{h_k}$ ($k=1,\cdots,n$) the standard finite
element solution in $V_{h_k}$.  If the sequence of finite
element spaces $V_{h_1}, \cdots,
V_{h_{n}}$ and the coarse finite element space $V_H$
satisfy the following condition
\begin{equation}\label{fe_sequence_condition}
C\eta_a(V_H)\beta^2 < 1,
\end{equation}
the following estimate holds
\begin{equation}\label{super_approx_eigenfunction}
\|\bar{u}_{h_k} - \widetilde{u}_{h_k}\|_1 \le C\eta_a(V_{h_k})
\delta_{h_k}(u),\ \ \ \ \ \ \ k=1,\cdots,n,
\end{equation}
and
\begin{equation}\label{super_approx_eigenfunction_b_norm}
  \|\bar{u}_{h_k} - \widetilde{u}_{h_k}\|_0 \le C\eta_a(V_{h_k})
  \delta_{h_k}(u),\ \ \ \ \ \ \ k=1,\cdots,n,
\end{equation}
where $C$ is a constant only depending on the eigenvalue $\lambda$.
The eigenvalue approximations $\widetilde{\lambda}_{h_k}$ and
$\bar{\lambda}_{h_k}$ have the following estimates
\begin{equation}\label{super_approx_eigenvalue}
\big| \bar{\lambda}_{h_k} - \widetilde{\lambda}_{h_k} \big|
\le C\eta_a(V_{h_k})\delta_{h_k}(u),\ \ \ \ \ \ \ k=1,2,\cdots,n.
\end{equation}
\end{theorem}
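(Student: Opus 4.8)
The plan is to argue by induction on $k$, establishing $(\ref{super_approx_eigenfunction})$, $(\ref{super_approx_eigenfunction_b_norm})$ and $(\ref{super_approx_eigenvalue})$ simultaneously while keeping the constant bounded independently of $k$ and $n$. For $k=1$ the first step of Algorithm \ref{Auxiliary_MCS_Alg} solves the discrete GPE exactly on $V_{h_1}$, so $\widetilde{u}_{h_1}=\bar{u}_{h_1}$, $\widetilde{\lambda}_{h_1}=\bar{\lambda}_{h_1}$ and the three estimates are trivial; assume they hold at level $k$. I would first record a uniform bound (not depending on the induction constant): from the Rayleigh quotient characterisation of the correction step below, together with the coercivity of $\widehat{a}(\cdot,\cdot)$ and the constraint $b(\cdot,\cdot)=1$, the functions $\widetilde{u}_{h_k}$, $\bar{u}_{h_k}$, $\bar{u}_{h_{k+1}}$, $u$ are bounded in $H_0^1(\Omega)$, hence (via $H_0^1(\Omega)\hookrightarrow L^6(\Omega)$ for $d\le 3$) in $L^6(\Omega)$, by a constant depending only on $\lambda$. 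Using this, $W\in L^3(\Omega)$ and $L^{6/5}(\Omega)\hookrightarrow H^{-1}(\Omega)$, the cubic map $w\mapsto(W+\zeta|w|^2)w$ is Lipschitz on this family from $L^2(\Omega)$ into $H^{-1}(\Omega)$, i.e. $\|(W+\zeta|w_1|^2)w_1-(W+\zeta|w_2|^2)w_2\|_{-1}\le C\|w_1-w_2\|_0$; this is what lets the nonlinearity be absorbed at the same order as the linear terms throughout.

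In the source step I would subtract $(\ref{Auxiliary_correct_source_exact})$ from the identity $\widehat{a}(\bar{u}_{h_{k+1}},v_{h_{k+1}})=\bar{\lambda}_{h_{k+1}}b(\bar{u}_{h_{k+1}},v_{h_{k+1}})-\big((W+\zeta|\bar{u}_{h_{k+1}}|^2)\bar{u}_{h_{k+1}},v_{h_{k+1}}\big)$ obeyed by the finite element eigenfunction, test with $v_{h_{k+1}}=\widehat{u}_{h_{k+1}}-\bar{u}_{h_{k+1}}$, and use the coercivity of $\widehat{a}(\cdot,\cdot)$ on the left and the Lipschitz bound with $b(\widetilde{u}_{h_k},\widetilde{u}_{h_k})=1$ on the right. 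Inserting $u$, $\bar{u}_{h_k}$, $\bar{\lambda}_{h_k}$ by the triangle inequality and invoking Lemma \ref{Err_Ground_Eigen_Lem}, the induction hypothesis at level $k$ and the mesh relations $(\ref{delta_recur_relation})$--$(\ref{Estimates_Eta_Delta_Hh})$ gives
\[
\|\widehat{u}_{h_{k+1}}-\bar{u}_{h_{k+1}}\|_1\le C\big(|\widetilde{\lambda}_{h_k}-\bar{\lambda}_{h_{k+1}}|+\|\widetilde{u}_{h_k}-\bar{u}_{h_{k+1}}\|_0\big)\le C(1+C_\star)\,\eta_a(V_{h_k})\delta_{h_k}(u)\approx C(1+C_\star)\,\beta^2\,\eta_a(V_{h_{k+1}})\delta_{h_{k+1}}(u),
\]
where $C_\star$ denotes the (eventually fixed) induction constant, and an Aubin--Nitsche duality argument on $V_{h_{k+1}}$ gives the same bound for $\|\widehat{u}_{h_{k+1}}-\bar{u}_{h_{k+1}}\|_0$.

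The correction step is the crux. Since $\widetilde{V}_{H,h_{k+1}}\supseteq V_H+{\rm span}\{\widehat{u}_{h_{k+1}}\}$ and $\lambda$ is simple, the lowest eigenpair of $(\ref{Auxiliary_correct_eig_exact})$ is well defined and close to $(\bar{\lambda}_{h_{k+1}},\bar{u}_{h_{k+1}})$. Linearising the GPE around $u$ (the operator $T$ of Lemma \ref{Err_Ground_Eigen_Lem}) and running the Rayleigh--Ritz/Babu\v{s}ka--Osborn spectral estimates in $\widetilde{V}_{H,h_{k+1}}$, combined with the multilevel-correction argument already carried out in \cite{XieXie2014}, I would obtain
\[
\|\bar{u}_{h_{k+1}}-\widetilde{u}_{h_{k+1}}\|_1\le C\,\eta_a(V_H)\,\|\bar{u}_{h_{k+1}}-\widehat{u}_{h_{k+1}}\|_0+C\,\eta_a(V_{h_{k+1}})\delta_{h_{k+1}}(u).
\]
The gain of the extra factor $\eta_a(V_H)$ in front of the $L^2$ defect of $\widehat{u}_{h_{k+1}}$ is what I expect to be the main obstacle: it rests on the fact that the residual $\bar{u}_{h_{k+1}}-\widehat{u}_{h_{k+1}}$ lies in the range of a discrete solution operator, so it is approximated by $V_H$ to order $\eta_a(V_H)$ times its $L^2$ norm, and carrying this through the nonlinear eigenvalue problem needs the sharp $L^2$/$H^1$ eigenvalue bookkeeping rather than the crude one. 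Feeding the source-step estimates into this inequality, the leading term is $C\eta_a(V_H)E_k+C\eta_a(V_H)\eta_a(V_{h_k})\delta_{h_k}(u)$ with $E_k:=\|\bar{u}_{h_k}-\widetilde{u}_{h_k}\|_1+\|\bar{u}_{h_k}-\widetilde{u}_{h_k}\|_0+|\bar{\lambda}_{h_k}-\widetilde{\lambda}_{h_k}|$; using $(\ref{fe_sequence_condition})$ together with $\eta_a(V_H)\eta_a(V_{h_k})\delta_{h_k}(u)\approx\eta_a(V_H)\beta^2\eta_a(V_{h_{k+1}})\delta_{h_{k+1}}(u)$ and the induction hypothesis $E_k\le C_\star\eta_a(V_{h_k})\delta_{h_k}(u)\approx C_\star\beta^2\eta_a(V_{h_{k+1}})\delta_{h_{k+1}}(u)$, both pieces are dominated by a fixed fraction of $\eta_a(V_{h_{k+1}})\delta_{h_{k+1}}(u)$ once $C\eta_a(V_H)\beta^2<1$, so $C_\star$ can be chosen large enough (depending only on $\lambda$) to close the induction for $(\ref{super_approx_eigenfunction})$.

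It remains to propagate $(\ref{super_approx_eigenfunction_b_norm})$ and $(\ref{super_approx_eigenvalue})$. The $L^2$ bound $(\ref{super_approx_eigenfunction_b_norm})$ follows from $(\ref{super_approx_eigenfunction})$ by one further Aubin--Nitsche duality step for the eigenvalue problem on $\widetilde{V}_{H,h_{k+1}}\supseteq V_H$, which only improves the estimate since $\eta_a(V_H)\le1$ by $(\ref{fe_sequence_condition})$. For $(\ref{super_approx_eigenvalue})$, I would use $\widetilde{\lambda}_{h_k}=a(\widetilde{u}_{h_k},\widetilde{u}_{h_k})$ and $\bar{\lambda}_{h_k}=a(\bar{u}_{h_k},\bar{u}_{h_k})$ (test the respective equations with their own solutions), the Galerkin property of $\bar{u}_{h_k}$ on $V_{h_k}$, the normalisation $b(\widetilde{u}_{h_k},\widetilde{u}_{h_k})=b(\bar{u}_{h_k},\bar{u}_{h_k})=1$ and the uniform $L^6$ bounds; expanding around $\bar{u}_{h_k}$ these yield $|\bar{\lambda}_{h_k}-\widetilde{\lambda}_{h_k}|\le C\big(\|\bar{u}_{h_k}-\widetilde{u}_{h_k}\|_1^2+\|\bar{u}_{h_k}-\widetilde{u}_{h_k}\|_0\big)$, which is $\le C\eta_a(V_{h_k})\delta_{h_k}(u)$ by $(\ref{super_approx_eigenfunction})$ and $(\ref{super_approx_eigenfunction_b_norm})$, completing the induction.
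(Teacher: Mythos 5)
Your overall architecture --- induction over levels, closed under $C\eta_a(V_H)\beta^2<1$ because a per-level factor $\eta_a(V_H)$ must beat the $\beta^2$ amplification of the target quantity $\eta_a(V_{h_k})\delta_{h_k}(u)$, with the cubic nonlinearity absorbed via uniform $H_0^1\hookrightarrow L^6$ bounds --- is the paper's, as are the base case, the source-step residual bound and the final $L^2$/eigenvalue propagation. But the step you yourself flag as the crux is, as written, a gap. You claim
\begin{equation*}
\|\bar{u}_{h_{k+1}}-\widetilde{u}_{h_{k+1}}\|_1\le C\,\eta_a(V_H)\,\|\bar{u}_{h_{k+1}}-\widehat{u}_{h_{k+1}}\|_0+C\,\eta_a(V_{h_{k+1}})\delta_{h_{k+1}}(u),
\end{equation*}
on the grounds that the residual $r:=\bar{u}_{h_{k+1}}-\widehat{u}_{h_{k+1}}$ ``is approximated by $V_H$ to order $\eta_a(V_H)$ times its $L^2$ norm.'' That is not what the solution-operator argument gives: $r$ is the discrete $\widehat{a}$-solution with $L^2$ data $g$ equal to the difference of the two right-hand sides of (\ref{Weak_GPE_Discrete}) and (\ref{Auxiliary_correct_source_exact}), so $V_H$ approximates $r$ to order $\eta_a(V_H)\|g\|_0$ (plus a consistency term of order $\eta_a(V_{h_{k+1}})\|g\|_0$), not $\eta_a(V_H)\|r\|_0$; since $\|r\|_1\le C\|g\|_{-1}$, the norm $\|r\|_0$ is generically much smaller than $\|g\|_0$, and the inequality you state is stronger than anything your sketch supports.

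The reason you are pushed into this delicate step is that your source-step estimate invokes the $L^2$ part of the induction hypothesis directly, $\|\bar{u}_{h_k}-\widetilde{u}_{h_k}\|_0\le C_\star\eta_a(V_{h_k})\delta_{h_k}(u)$, which throws away an available factor of $\eta_a(V_H)$; with only the plain quasi-optimality $\|\bar{u}_{h_{k+1}}-\widetilde{u}_{h_{k+1}}\|_1\le C\|r\|_1$ the induction then cannot close (it would require $C\beta^2<1$). The paper sidesteps the difficulty: it bounds the source-step residual by $\varepsilon_{h_k}=|\bar{\lambda}_{h_k}-\widetilde{\lambda}_{h_k}|+\|\bar{u}_{h_k}-\widetilde{u}_{h_k}\|_0$ and then uses the sharper duality estimates $\|\bar{u}_{h_k}-\widetilde{u}_{h_k}\|_0\le C\eta_a(V_H)\|\bar{u}_{h_k}-\widetilde{u}_{h_k}\|_1$ and $|\bar{\lambda}_{h_k}-\widetilde{\lambda}_{h_k}|\le C\eta_a(V_H)\|\bar{u}_{h_k}-\widetilde{u}_{h_k}\|_1$ (valid because $V_H\subset\widetilde{V}_{H,h_k}$, so $\eta_a(\widetilde{V}_{H,h_k})\le\eta_a(V_H)$ --- the very Aubin--Nitsche step you invoke only at the end for (\ref{super_approx_eigenfunction_b_norm})). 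This yields the recursion $\|\bar{u}_{h_{k+1}}-\widetilde{u}_{h_{k+1}}\|_1\le C\eta_a(V_H)\|\bar{u}_{h_k}-\widetilde{u}_{h_k}\|_1+C\eta_a(V_{h_k})\delta_{h_k}(u)$, whose unfolding is a geometric series in $C\eta_a(V_H)\beta^2$, and the correction step needs nothing beyond quasi-optimality with the comparison function $\widehat{u}_{h_{k+1}}\in\widetilde{V}_{H,h_{k+1}}$. Your argument is repaired either by adopting that route, or by replacing $\|r\|_0$ with $\|g\|_0$ in your correction-step inequality (your subsequent feeding-in of the source-step bound still closes the induction in that case); as it stands, the key inequality is unproved and its stated justification does not yield it.
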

\begin{proof}
Define $\varepsilon_{h_k}:=|\widetilde{\lambda}_{h_k}-\bar{\lambda}_{h_k}|+
\|\widetilde{u}_{h_k}-\bar{u}_{h_k}\|_0,\ k=1,2,\cdots,n$. And it is obvious that
$\varepsilon_{h_1}=0$.
From (\ref{Weak_GPE_Discrete}) and (\ref{Auxiliary_correct_source_exact}), we have
\begin{eqnarray*}
&&  \widehat{a}(\bar{u}_{h_{k+1}} - \widehat{u}_{h_{k+1}},v_{h_{k+1}}) \nonumber\\
&=&b(\bar{\lambda}_{h_{k+1}}\bar{u}_{h_{k+1}}-
\widetilde{\lambda}_{h_k}\widetilde{u}_{h_k},v_{h_{k+1}}) \\
&&+ \big(\big(W+\zeta|u^{h_k}|^2\big)u^{h_k}
-\big(W+\zeta|\bar{u}_{h_{k+1}}|^2\big)\bar{u}_{h_{k+1}},
v_{h_{k+1}}\big) \nonumber\\
&\le& C\big(\|\bar{\lambda}_{h_{k+1}}\bar{u}_{h_{k+1}}-\widetilde{\lambda}_{h_k}
\widetilde{u}_{h_k}\|_0 \|v_{h_{k+1}}\|_1  \\
&&\quad\quad + \|\bar{u}_{h_{k+1}}-\widetilde{u}_{h_k}\|_0
\big(\|\bar{u}_{h_{k+1}}\|_{0,6,\Omega}^2
+\|\widetilde{u}_{h_k}\|_{0,6,\Omega}^2\big)\|v_{h_{k+1}}\|_1 \big) \\
&\le& C\big(\|\bar{\lambda}_{h_{k+1}}\bar{u}_{h_{k+1}}-\bar{\lambda}_{h_k}\bar{u}_{h_k}\|_0
+\|\bar{\lambda}_{h_k}\bar{u}_{h_k}-\widetilde{\lambda}_{h_k}\widetilde{u}_{h_k}\|_0  \\
&&+ \big(\|\bar{u}_{h_{k+1}}-\bar{u}_{h_k}\|_0+ \|\bar{u}_{h_k}-\widetilde{u}_{h_k}\|_0\big)
\big(\|\bar{u}_{h_{k+1}}\|_1^2+\|\widetilde{u}_{h_k}\|_1^2\big)\big)\|v_{h_{k+1}}\|_1 \\
&\le& C\big(|\bar{\lambda}_{h_{k+1}}-\bar{\lambda}_{h_k}|
+\|\bar{u}_{h_{k+1}}-\bar{u}_{h_k}\|_0+\varepsilon_{h_k}\big)\|v_{h_{k+1}}\|_1.
\end{eqnarray*}
It leads to the following estimates
\begin{eqnarray}\label{Error_1}
\|\bar{u}_{h_{k+1}} - \widehat{u}_{h_{k+1}}\|_1 \le
C\big(|\bar{\lambda}_{h_{k+1}}-\bar{\lambda}_{h_k}|
+\|\bar{u}_{h_{k+1}}-\bar{u}_{h_k}\|_0+\varepsilon_{h_k} \big).
\end{eqnarray}
Note that the eigenvalue problem (\ref{Auxiliary_correct_eig_exact}) can be regarded
 as a finite dimensional subspace approximation of the eigenvalue problem (\ref{Weak_GPE_Discrete}).
Similarly to Lemma \ref{Err_Ground_Eigen_Lem} (see \cite{Maday2010}),
from the second step in Algorithm \ref{Auxiliary_Correction_Alg},
the following estimate holds
\begin{equation}\label{Error_2}
\|\bar{u}_{h_{k+1}} - \widetilde{u}_{h_{k+1}} \|_1\le
C\inf_{\widetilde{v}_{H,h_{k+1}}\in\widetilde{V}_{H,h_{k+1}}}
\|\bar{u}_{h_{k+1}}-\widetilde{v}_{H,h_{k+1}}\|_1 \le
C \| \bar{u}_{h_{k+1}} - \widehat{u}_{h_{k+1}} \|_1.
\end{equation}
Then combining (\ref{Error_1}) and (\ref{Error_2}) leads to
\begin{eqnarray}\label{super_approx_recur_break_1}
\| \bar{u}_{h_{k+1}} - \widetilde{u}_{h_{k+1}} \|_1
\le C\big(|\bar{\lambda}_{h_{k+1}}-\bar{\lambda}_{h_k}|
+\|\bar{u}_{h_{k+1}}-\bar{u}_{h_k}\|_0+\varepsilon_{h_k}\big).
\end{eqnarray}
From the properties of $V_{h_k}\subset V_{h_{k+1}}$, $\widetilde{V}_{H,h_k}\subset V_{h_k}$,
Lemma \ref{Err_Ground_Eigen_Lem} and (\ref{delta_recur_relation}), we have
\begin{eqnarray*}
&&\|\bar{u}_{h_{k+1}} - \bar{u}_{h_{k}}\|_1\le C\delta_{h_{k}}(u), \ \ \
\|\bar{u}_{h_{k+1}} - \bar{u}_{h_{k}}\|_0
\le C\eta_a(V_{h_k})\|\bar{u}_{h_{k+1}} - \bar{u}_{h_{k}}\|_1,\nonumber\\
&&\big| \bar{\lambda}_{h_{k+}} - \bar{\lambda}_{h_{k}} \big|
\leq C\eta_a(V_{h_k})\delta_{h_k}(u),\ \ \
\| \bar{u}_{h_k} - \widetilde{u}_{h_k} \|_0 \le C\eta_a(V_H)
\| \bar{u}_{h_k} - \widetilde{u}_{h_k} \|_1,\\
&&\big| \bar{\lambda}_{h_{k}} - \widetilde{\lambda}_{h_{k}} \big|
\le C\eta_a(V_H)\| \bar{u}_{h_k} - \widetilde{u}_{h_k} \|_1.
\end{eqnarray*}
Substituting above inequalities into (\ref{super_approx_recur_break_1})
leads to the following estimates
\begin{eqnarray}\label{super_approx_recur_break_2}
\| \bar{u}_{h_{k+1}} - \widetilde{u}_{h_{k+1}} \|_1 &\le& C\big(
\eta_a(V_{h_k})\delta_{h_k}(u) + \varepsilon_{h_k} \big) \nonumber\\
&\le& C\big(\eta_a(V_{h_k})\delta_{h_k}(u) + \eta_a(V_H)\|\bar{u}_{h_k} - \widetilde{u}_{h_k}\|_1 \big).
\end{eqnarray}
When $k=1$, since $\widetilde{u}_{h_1}:=\bar{u}_{h_1}$ and
$\widetilde{\lambda}_{h_1}:=\bar{\lambda}_{h_1}$, we have
\begin{eqnarray}\label{Error_k_1}
\| \bar{u}_{h_{2}} - \widetilde{u}_{h_{2}} \|_1 \leq C\eta_a(V_{h_1})\delta_{h_1}(u).
\end{eqnarray}
Based on (\ref{delta_recur_relation}), (\ref{super_approx_recur_break_2}), (\ref{Error_k_1})
and recursive argument, we have the following estimates:
\begin{eqnarray}\label{super_approx_recur_break_3}
\|\bar{u}_{h_{k}} - \widetilde{u}_{h_{k}}\|_1 &\le& C\sum_{j=2}^k \big(C\eta_a(V_H)\big)^{k-j}
\eta_a(V_{h_{j-1}})\delta_{h_{j-1}}(u)  \nonumber \\
&\le& C\sum_{j=2}^k \big(C\eta_a(V_H)\big)^{k-j}
\beta^{k-j+1}\eta_a(V_{h_k})\beta^{k-j+1}\delta_{h_k}(u) \nonumber\\
&\le& C{\beta}^2\Big(\sum_{j=2}^k \big(C\eta_a(V_H)\beta^2\big)^{k-j}\Big)
\eta_a(V_{h_k})\delta_{h_k}(u)\nonumber\\
&\le& \frac{C\beta^2}{1-C\beta^2\eta_a(V_H)}\eta_a(V_{h_k})\delta_{h_k}(u).
\end{eqnarray}
Therefore,  the desired result (\ref{super_approx_eigenfunction}) holds under
the condition $C\eta_a(V_H)\beta^2<1$. Furthermore, (\ref{super_approx_eigenfunction_b_norm})
and (\ref{super_approx_eigenvalue}) can be obtained directly from Lemma \ref{Err_Ground_Eigen_Lem}
and the property $\widetilde{V}_{H,h_{k+1}}\subset V_{h_{k+1}}$.
\end{proof}

\vspace{1ex}
Note that $V_H^{h_k}\subset\widetilde{V}_{H,h_k}$,
 we can obtain the following estimates which play an important role in our analysis.

\begin{lemma}\cite[Theorem 1]{Maday2010}\label{E_h_less_P_h_Lem}
Let $u^{h_k}$, $V_H^{h_k}$ and $\widetilde{u}_{h_k}$, $\widetilde{V}_{H,h_k}$ be
defined in Algorithms \ref{Smooth_Correction_Alg} and \ref{Auxiliary_Correction_Alg}.
Then the following estimates hold:
\begin{eqnarray}
\|u^{h_k} - \widetilde{u}_{h_k}\|_1 &\le&
 C\|\widehat{u}_{h_k}-\widetilde{u}^{h_k}\|_1, \label{E_h_less_P_h_a}\\
\|u^{h_k} - \widetilde{u}_{h_k}\|_0 &\le& C\eta_a(V_H)\|u^{h_k}
- \widetilde{u}_{h_k}\|_1, \label{E_h_less_P_h_b}\\
|\lambda^{h_k} - \widetilde{\lambda}_{h_k}| &\le&
C \eta_a(V_H)\|u^{h_k} - \widetilde{u}_{h_k}\|_1. \label{E_h_less_P_h_eigenvalue}
\end{eqnarray}
\end{lemma}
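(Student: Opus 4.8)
The plan is to recognize that the eigenvalue problem \eqref{cascadic_correct_eig_exact} solved on $V_H^{h_k}$ and the eigenvalue problem \eqref{Auxiliary_correct_eig_exact} solved on $\widetilde V_{H,h_k}$ are both finite-dimensional Galerkin (subspace) approximations to the full discrete GPE \eqref{Weak_GPE_Discrete} in $V_{h_k}$, and that, by construction, $V_H^{h_k}\subseteq\widetilde V_{H,h_k}\subseteq V_{h_k}$. Since $\widetilde u^{h_k}\in V_H^{h_k}$ by definition of $V_H^{h_k}=V_H+\mathrm{span}\{\widetilde u^{h_k}\}$, and $\widetilde V_{H,h_k}=V_H+\mathrm{span}\{\widehat u_{h_k}\}+\mathrm{span}\{\widetilde u^{h_k}\}$ contains $V_H^{h_k}$, Lemma~\ref{Err_Ground_Eigen_Lem} applied with the coarse space $V_H^{h_k}$ in place of $V_h$ (and the fine discrete eigenpair $(\widetilde\lambda_{h_k},\widetilde u_{h_k})$ playing the role of the exact pair, using the nested structure) gives the three estimates in terms of $\inf_{v\in V_H^{h_k}}\|\widetilde u_{h_k}-v\|_1$. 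So the real task reduces to bounding this best-approximation error from above by $C\|\widehat u_{h_k}-\widetilde u^{h_k}\|_1$.

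First I would pick the trial function $v = \widetilde u^{h_k}+(\widetilde u_{h_k}-\widehat u_{h_k})\in V_H^{h_k}$ (this lies in $V_H^{h_k}$ provided $\widetilde u_{h_k}-\widehat u_{h_k}\in V_H^{h_k}$ — and indeed $\widetilde u_{h_k}\in\widetilde V_{H,h_k}$, $\widehat u_{h_k}\in\widetilde V_{H,h_k}$; one must verify their difference actually lands in the smaller space $V_H^{h_k}$, which holds because $\widetilde u_{h_k}$ expanded in the basis of $\widetilde V_{H,h_k}$ and then $\widehat u_{h_k}$ subtracted off cancels the $\widehat u_{h_k}$-component, leaving something in $V_H+\mathrm{span}\{\widetilde u^{h_k}\}=V_H^{h_k}$). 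With this choice, $\|\widetilde u_{h_k}-v\|_1=\|\widehat u_{h_k}-\widetilde u^{h_k}\|_1$, hence $\inf_{v\in V_H^{h_k}}\|\widetilde u_{h_k}-v\|_1\le\|\widehat u_{h_k}-\widetilde u^{h_k}\|_1$. Feeding this into the $H^1$-estimate of Lemma~\ref{Err_Ground_Eigen_Lem} yields \eqref{E_h_less_P_h_a} with a constant absorbing the quasi-uniformity and the simplicity-of-$\lambda$ dependence.

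Next, for \eqref{E_h_less_P_h_b} and \eqref{E_h_less_P_h_eigenvalue} I would invoke the remaining two bounds of Lemma~\ref{Err_Ground_Eigen_Lem} applied to the same subspace pair: the $L^2$-error is controlled by $\eta_a(\cdot)$ times the $H^1$-error, and the eigenvalue error by $\eta_a(\cdot)$ times the $H^1$-error (the $\|u-u_h\|_1^2$ term being of higher order and absorbed). The subtlety is which $\eta_a$ appears: because the coarse space is $V_H^{h_k}\supseteq V_H$, one has $\eta_a(V_H^{h_k})\le\eta_a(V_H)$ (enlarging the space can only decrease the approximation quantity), so both estimates can be stated with $\eta_a(V_H)$ as written. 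Combining gives \eqref{E_h_less_P_h_b} and \eqref{E_h_less_P_h_eigenvalue}.

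The main obstacle I anticipate is the bookkeeping needed to justify that $\widetilde u_{h_k}-\widehat u_{h_k}\in V_H^{h_k}$ (equivalently, that the chosen trial function genuinely lies in the smaller space rather than only in $\widetilde V_{H,h_k}$), and — more delicately — making rigorous the reading of Lemma~\ref{Err_Ground_Eigen_Lem} as a statement about Galerkin approximation between two nested \emph{discrete} spaces rather than between the continuous space and a discrete one; this requires checking that the operator-theoretic constants (spectral gap, boundedness of the relevant solution operator) transfer uniformly to the discrete setting, which is exactly what the cited \cite[Theorem~1]{Maday2010} provides for the perturbation argument. Once that identification is granted, the rest is the short computation above.
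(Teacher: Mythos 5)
Your overall strategy is the paper's: view $u^{h_k}$ as the Galerkin approximation in $V_H^{h_k}$ of the eigenvalue problem posed in the larger space $\widetilde V_{H,h_k}$, invoke the quasi-optimality, $L^2$ and eigenvalue bounds of Lemma~\ref{Err_Ground_Eigen_Lem} for this nested pair of \emph{discrete} spaces (with $\eta_a(V_H^{h_k})\le\eta_a(V_H)$ because $V_H\subset V_H^{h_k}$), and thereby reduce the whole lemma to bounding $\inf_{v\in V_H^{h_k}}\|\widetilde u_{h_k}-v\|_1$ by $C\|\widehat u_{h_k}-\widetilde u^{h_k}\|_1$. That reduction and the treatment of (\ref{E_h_less_P_h_b}) and (\ref{E_h_less_P_h_eigenvalue}) coincide with the paper's proof.

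The one concrete step you supply for the reduction is, however, wrong as stated. You take the trial function $v=\widetilde u^{h_k}+(\widetilde u_{h_k}-\widehat u_{h_k})$ and claim $\widetilde u_{h_k}-\widehat u_{h_k}\in V_H^{h_k}$ because ``subtracting $\widehat u_{h_k}$ cancels the $\widehat u_{h_k}$-component''. Writing $\widetilde u_{h_k}=w_H+c_1\widehat u_{h_k}+c_2\widetilde u^{h_k}$ with $w_H\in V_H$, one gets $\widetilde u_{h_k}-\widehat u_{h_k}=w_H+(c_1-1)\widehat u_{h_k}+c_2\widetilde u^{h_k}$, which lies in $V_H^{h_k}$ only if $c_1=1$; the normalization $b(\widetilde u_{h_k},\widetilde u_{h_k})=1$ does not force this, so your $v$ is in general not admissible. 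The repair is to subtract the actual coefficient: take $v=\widetilde u_{h_k}-c_1\big(\widehat u_{h_k}-\widetilde u^{h_k}\big)=w_H+(c_1+c_2)\widetilde u^{h_k}\in V_H^{h_k}$, giving $\|\widetilde u_{h_k}-v\|_1=|c_1|\,\|\widehat u_{h_k}-\widetilde u^{h_k}\|_1$, and then show $|c_1|\le C$ (which follows since $\widetilde u_{h_k}$, $\widehat u_{h_k}$ and $\widetilde u^{h_k}$ are all $H^1$-close to the normalized $\bar u_{h_k}$, so the expansion coefficients stay uniformly bounded). This is precisely the content of the paper's intermediate inequality $\inf_{v\in V_H^{h_k}}\|\widetilde u_{h_k}-v\|_1\le C\inf_{v\in V_H^{h_k}}\|\widehat u_{h_k}-v\|_1$, where the coefficient is absorbed into the generic constant; your version, which in effect asserts this with $C=1$ and an exact identity, does not hold. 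With that correction the rest of your argument goes through and is essentially identical to the paper's.
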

\begin{proof}
Since $V_H^{h_k}\subset\widetilde{V}_{H,h_k}$, according to (\ref{cascadic_correct_eig_exact})
 and (\ref{Auxiliary_correct_eig_exact}), $u^{h_k}$
  can be viewed as the spectral projection of $\widetilde{u}_{h_k}$.
  Then from Lemma \ref{Err_Ground_Eigen_Lem} and the definitions of $\widetilde{V}_{H,h_k}$
  and $V_H^{h_k}$, we have
\begin{eqnarray}
\|\widetilde{u}_{h_k}-u^{h_k}\|_1 &\le& C\inf_{v_H^{h_k}
\in V_H^{h_k}}\|\widetilde{u}_{h_k}-v_H^{h_k}\|_1 \le
 C\inf_{v_H^{h_k}\in V_H^{h_k}}\|\widehat{u}_{h_k}-v_H^{h_k}\|_1 \nonumber\\
&\le& C\|\widehat{u}_{h_k}-\widetilde{u}^{h_k}\|_1,
\end{eqnarray}
which is the desired result (\ref{E_h_less_P_h_a}).

Similarly, we also have (\ref{E_h_less_P_h_b}) by the following argument
\begin{eqnarray*}
\|\widetilde{u}_{h_{k}} - u^{h_{k}}\|_0 \le
 C\eta_a(V_H^{h_k})\|\widetilde{u}_{h_{k}} - u^{h_{k}}\|_1
\le C\eta_a(V_H)\|\widetilde{u}_{h_{k}} - u^{h_{k}}\|_1.
\end{eqnarray*}
Furthermore, (\ref{E_h_less_P_h_eigenvalue}) can be obtained directly
from Lemma \ref{Err_Ground_Eigen_Lem}
and the proof is complete.
\end{proof}

\begin{remark}
Since $V_H\subset V_H^{h_k}$ and $V_H\subset \widetilde{V}_{H,h_k}$,
from Lemma \ref{Err_Ground_Eigen_Lem}, we have
\begin{equation}\label{u_hat_u_tilde_a_less_H}
\|u^{h_{k}} - \widetilde{u}_{h_{k}}\|_1 \le \|u^{h_{k}} - u\|_1 +
\| u- \widetilde{u}_{h_{k}}\|_1 \le C\delta_{H}(u).
\end{equation}
\end{remark}

\vspace{1ex} 
Now, we come to give error estimates for Algorithm \ref{Cascadic_MCS_Alg}.
\begin{theorem}\label{Cascadic_Convergence_Thm}
Assume the eigenpair approximation $(\lambda^{h_n}, u^{h_n})$ is
obtained by Algorithm \ref{Cascadic_MCS_Alg},
$(\widetilde{\lambda}_{h_n}, \widetilde{u}_{h_n})$
is obtained by Algorithm \ref{Auxiliary_MCS_Alg} and the smoother selected in each
level $V_{h_k}$ satisfy the smoothing property (\ref{smoothing_property})
 for $k=1, \cdots, n$.
Under the conditions of Theorem \ref{super_approx_thm},
we have the following estimate
\begin{equation}\label{cascadic_convergence_eigenfunction}
\|\widetilde{u}_{h_n} - u^{h_n}\|_1 \le
C\sum_{k=2}^n \frac{\big(1+C\eta_a(V_H)\big)^{n-k}}{m_k^{\alpha}}\delta_{h_k}(u),
\end{equation}
and the corresponding eigenvalue error estimate
\begin{equation}\label{cascadic_convergence_eigenvalue}
\big| \widetilde{\lambda}_{h_n} - \lambda^{h_n} \big|
\le C\eta_a(V_H)\sum_{k=2}^n \frac{\big(1+C\eta_a(V_H)\big)^{n-k}}{m_k^{\alpha}}\delta_{h_k}(u).
\end{equation}
\end{theorem}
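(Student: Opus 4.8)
The plan is to estimate $\|\widetilde{u}_{h_n} - u^{h_n}\|_1$ recursively over the levels, using the triangle inequality to split the error at step $k+1$ into (i) the error coming from the previous level, propagated through one correction step, and (ii) the new error introduced at level $k+1$ by replacing the \emph{exact} boundary-value solve $\widehat{u}^{h_{k+1}}$ with the \emph{smoothed} approximation $\widetilde{u}^{h_{k+1}}$. The key observation is that Algorithm~\ref{Smooth_Correction_Alg} (cascadic) and Algorithm~\ref{Auxiliary_Correction_Alg} (auxiliary) are built so that $V_H^{h_{k+1}}\subset \widetilde{V}_{H,h_{k+1}}$, hence by Lemma~\ref{E_h_less_P_h_Lem}
\[
\|u^{h_{k+1}} - \widetilde{u}_{h_{k+1}}\|_1 \le C\|\widehat{u}_{h_{k+1}} - \widetilde{u}^{h_{k+1}}\|_1 .
\]
So everything reduces to bounding $\|\widehat{u}_{h_{k+1}} - \widetilde{u}^{h_{k+1}}\|_1$, the difference between the exact solution of \eqref{Auxiliary_correct_source_exact} and the output of the smoothing process \eqref{smooth_correct_process}.

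Next I would analyze $\|\widehat{u}_{h_{k+1}} - \widetilde{u}^{h_{k+1}}\|_1$. Write $\widehat{u}^{h_{k+1}}$ for the exact FE solution of the cascadic source problem \eqref{correct_source_exact_cas} with right-hand side built from $(\lambda^{h_k}, u^{h_k})$, and $\widehat{u}_{h_{k+1}}$ for the exact FE solution of the auxiliary source problem \eqref{Auxiliary_correct_source_exact} with right-hand side built from $(\widetilde{\lambda}_{h_k}, \widetilde{u}_{h_k})$. Then I split
\[
\|\widehat{u}_{h_{k+1}} - \widetilde{u}^{h_{k+1}}\|_1
\le \|\widehat{u}_{h_{k+1}} - \widehat{u}^{h_{k+1}}\|_1
   + \|\widehat{u}^{h_{k+1}} - \widetilde{u}^{h_{k+1}}\|_1 .
\]
For the first term, subtract the two variational problems and estimate the difference of the right-hand sides exactly as in the proof of Theorem~\ref{super_approx_thm}: using $\|\cdot\|_{0,6,\Omega}$-bounds (Sobolev embedding) for the cubic nonlinearity and the uniform $H^1$-bounds on the eigenfunction approximations, this is controlled by $C(|\lambda^{h_k}-\widetilde{\lambda}_{h_k}| + \|u^{h_k}-\widetilde{u}_{h_k}\|_0)$, which by Lemma~\ref{E_h_less_P_h_Lem} is at most $C\eta_a(V_H)\|u^{h_k}-\widetilde{u}_{h_k}\|_1$. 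For the second term, $\widetilde{u}^{h_{k+1}}$ is obtained from $\widehat{u}^{h_{k+1}}$'s data by $m_{k+1}$ smoothing steps starting from $u^{h_k}$; applying the smoothing property \eqref{smoothing_property} to the error $\widetilde{u}^{h_{k+1}} - \widehat{u}^{h_{k+1}}$ (which is $S_{h_{k+1}}^{m_{k+1}}$ applied to the initial error $u^{h_k}-\widehat{u}^{h_{k+1}}$) gives
\[
\|\widehat{u}^{h_{k+1}} - \widetilde{u}^{h_{k+1}}\|_1
\le \frac{C}{m_{k+1}^{\alpha}}\,\frac{1}{h_{k+1}}\,\|u^{h_k}-\widehat{u}^{h_{k+1}}\|_0 ,
\]
and then $\|u^{h_k}-\widehat{u}^{h_{k+1}}\|_0 \le C h_{k+1}\,\delta_{h_k}(u)$ — using \eqref{u_hat_u_tilde_a_less_H}-type bounds together with $\delta_{h_k}(u)\approx h_k\approx\beta h_{k+1}$ and the relation \eqref{Estimates_Eta_Delta_Hh} between $\eta_a(V_{h_{k+1}})$ and $h_{k+1}$ — so this term is bounded by $\frac{C}{m_{k+1}^{\alpha}}\delta_{h_{k+1}}(u)$ (the factor $1/h_{k+1}$ is absorbed by the $h_{k+1}$ coming from the $L^2$-error of the source approximation).

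Combining the two pieces yields the one-step recursion
\[
\|u^{h_{k+1}}-\widetilde{u}_{h_{k+1}}\|_1
\le C\,\eta_a(V_H)\,\|u^{h_k}-\widetilde{u}_{h_k}\|_1
   + \frac{C}{m_{k+1}^{\alpha}}\,\delta_{h_{k+1}}(u),
\]
with the starting value $\|u^{h_1}-\widetilde{u}_{h_1}\|_1 = 0$ since both algorithms solve the same problem exactly on $V_{h_1}$. Unrolling the recursion gives
\[
\|u^{h_n}-\widetilde{u}_{h_n}\|_1
\le C\sum_{k=2}^n \big(C\eta_a(V_H)\big)^{n-k}\frac{\delta_{h_k}(u)}{m_k^{\alpha}},
\]
which is \eqref{cascadic_convergence_eigenfunction} (the stated form with $(1+C\eta_a(V_H))^{n-k}$ is a harmless enlargement of $(C\eta_a(V_H))^{n-k}$; one can equally carry the $(1+\cdot)$ factor from the start by keeping a $\|u^{h_k}-\widetilde{u}_{h_k}\|_1$ term with coefficient $1+C\eta_a(V_H)$ when bounding the propagated error). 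Finally, \eqref{cascadic_convergence_eigenvalue} follows from Lemma~\ref{E_h_less_P_h_Lem}, estimate \eqref{E_h_less_P_h_eigenvalue}, which says $|\lambda^{h_n}-\widetilde{\lambda}_{h_n}| \le C\eta_a(V_H)\|u^{h_n}-\widetilde{u}_{h_n}\|_1$, applied to the bound just obtained.

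The main obstacle I anticipate is the careful bookkeeping of mesh-size powers in the smoothing estimate: one must check that the $1/h_{k+1}$ blow-up in \eqref{smoothing_property} is exactly compensated by the $O(h_{k+1})$ decay of the $L^2$-norm of the initial smoothing error $u^{h_k}-\widehat{u}^{h_{k+1}}$, which in turn rests on the a priori bounds $\|u^{h_k}-u\|_1\lesssim\delta_{h_k}(u)$ and $\|\widehat{u}^{h_{k+1}}-u\|_1\lesssim\delta_{h_{k+1}}(u)$ together with an $L^2$-$H^1$ duality (Aubin–Nitsche) argument for the source problem on $V_{h_{k+1}}$; getting the constants to line up so that the net contribution is clean $\frac{C}{m_{k+1}^{\alpha}}\delta_{h_{k+1}}(u)$, rather than $\frac{C}{m_{k+1}^{\alpha}}\delta_{h_k}(u)$ with a spurious $\beta$, is the delicate point. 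The nonlinear-term estimates are routine given the Sobolev embedding $H^1_0(\Omega)\hookrightarrow L^6(\Omega)$ for $d\le 3$ and the uniform boundedness of all eigenfunction iterates.
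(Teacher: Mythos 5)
Your overall architecture matches the paper's: reduce $\|u^{h_{k+1}}-\widetilde u_{h_{k+1}}\|_1$ to $\|\widehat u_{h_{k+1}}-\widetilde u^{h_{k+1}}\|_1$ via Lemma \ref{E_h_less_P_h_Lem}, peel off $\|\widehat u_{h_{k+1}}-\widehat u^{h_{k+1}}\|_1\le C\eta_a(V_H)\|e_{h_k}\|_1$, and unroll a one-step recursion. The gap is in your treatment of the smoothing term. You apply the first inequality of (\ref{smoothing_property}) to the \emph{whole} initial error, giving $\|\widehat u^{h_{k+1}}-\widetilde u^{h_{k+1}}\|_1\le \frac{C}{m_{k+1}^{\alpha}}\frac{1}{h_{k+1}}\|u^{h_k}-\widehat u^{h_{k+1}}\|_0$, and then claim $\|u^{h_k}-\widehat u^{h_{k+1}}\|_0\lesssim h_{k+1}\delta_{h_k}(u)$. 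That $L^2$ bound is not available: $u^{h_k}-\widehat u^{h_{k+1}}$ contains the piece $u^{h_k}-\widetilde u_{h_k}$, and the only control on it is $\|u^{h_k}-\widetilde u_{h_k}\|_0\le C\eta_a(V_H)\|u^{h_k}-\widetilde u_{h_k}\|_1$, i.e.\ $O(H\|e_{h_k}\|_1)$, with no factor $h_{k+1}$. Theorem \ref{Cascadic_Convergence_Thm} places no restriction on the $m_k$, so you cannot assume $\|e_{h_k}\|_1\lesssim \delta_{h_k}(u)$ either (that would be circular; and even under (\ref{iter_reccur_relation}) one only gets $\|e_{h_k}\|_1\approx h_k/\bar m^{\alpha}$, so $H\|e_{h_k}\|_1$ is nowhere near $O(h_{k+1}h_k)$ for fine levels). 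Your recursion therefore really reads
\begin{equation*}
\|e_{h_{k+1}}\|_1\le\Big(C\eta_a(V_H)+\frac{C\eta_a(V_H)}{m_{k+1}^{\alpha}h_{k+1}}\Big)\|e_{h_k}\|_1+\frac{C}{m_{k+1}^{\alpha}}\delta_{h_{k+1}}(u),
\end{equation*}
and the propagation factor $\eta_a(V_H)/(m_{k+1}^{\alpha}h_{k+1})\approx H/(m_{k+1}^{\alpha}h_{k+1})$ blows up as $k\to n$, so (\ref{cascadic_convergence_eigenfunction}) does not follow. An Aubin--Nitsche argument does not rescue this, since $u^{h_k}-\widehat u^{h_{k+1}}$ is not a Galerkin error.

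The repair is the paper's three-way splitting of the initial smoothing error: write $\widehat u^{h_{k+1}}-u^{h_k} = (\widehat u^{h_{k+1}}-\widehat u_{h_{k+1}})+(\widehat u_{h_{k+1}}-\widetilde u_{h_k})+(\widetilde u_{h_k}-u^{h_k})$, use the non-expansiveness $\|S^m w\|_1\le\|w\|_1$ on the first and third pieces --- which are already of size $C\eta_a(V_H)\|e_{h_k}\|_1$ and $\|e_{h_k}\|_1$ in $H^1$; this is precisely where the ``$1$'' in $(1+C\eta_a(V_H))^{n-k}$ comes from, so it is not a cosmetic enlargement --- and apply the $\frac{C}{m^{\alpha}h}\|\cdot\|_0$ bound only to the middle piece $\widehat u_{h_{k+1}}-\widetilde u_{h_k}$. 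That piece is built entirely from the auxiliary algorithm's quantities and satisfies $\|\widehat u_{h_{k+1}}-\widetilde u_{h_k}\|_0\le C\eta_a(V_{h_{k+1}})\delta_{h_{k+1}}(u)$ by Theorem \ref{super_approx_thm} and Lemma \ref{Err_Ground_Eigen_Lem}; the factor $\eta_a(V_{h_{k+1}})\approx h_{k+1}$ is exactly what cancels the $1/h_{k+1}$ from the smoother. You correctly identified this cancellation as the delicate point, but the decomposition you chose does not deliver it.
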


\begin{proof}
 Define $e_{h_k}:=u^{h_k} - \widetilde{u}_{h_k}$ for $k=1, \cdots, n$.
 Then it is easy to see that $e_{h_1}=0$.

 From Lemma \ref{E_h_less_P_h_Lem}, the following inequalities hold
\begin{eqnarray}
\|e_{h_{k+1}}\|_1 &=& \|u^{h_{k+1}} - \widetilde{u}_{h_{k+1}}\|_1
\le C\|\widehat{u}_{h_{k+1}} - \widetilde{u}^{h_{k+1}}\|_1 \nonumber\\
&\le& C\big(\|\widehat{u}_{h_{k+1}} - \widehat{u}^{h_{k+1}}\|_1
+ \|\widehat{u}^{h_{k+1}} - \widetilde{u}^{h_{k+1}}\|_1\big).
\label{cascadic_convergence_break_1}
\end{eqnarray}
For the first term in (\ref{cascadic_convergence_break_1}), together
with (\ref{correct_source_exact_cas}), (\ref{Auxiliary_correct_source_exact}), Lemma \ref{E_h_less_P_h_Lem}
 and (\ref{u_hat_u_tilde_a_less_H}), we have
\begin{eqnarray}\label{cascadic_convergence_break_2}
\|\widehat{u}_{h_{k+1}} - \widehat{u}^{h_{k+1}}\|_1 &\le& C\|\lambda^{h_k}u^{h_k}
- \widetilde{\lambda}_{h_k}\widetilde{u}_{h_k}\|_0\nonumber\\
&\le& C\big(|\lambda^{h_k}-\widetilde{\lambda}_{h_k}|
+ \|u^{h_k}-\widetilde{u}_{h_k}\|_0 \big)\nonumber\\
&\le& C\eta_a(V_H)\|u^{h_k}-\widetilde{u}_{h_k}\|_1\nonumber\\
&=& C\eta_a(V_H)\|e_{h_k}\|_1.
\end{eqnarray}
For the second term in (\ref{cascadic_convergence_break_1}),
 due to (\ref{smoothing_property}) and (\ref{cascadic_convergence_break_2}),
 the following estimates hold
\begin{eqnarray}
&&\|\widehat{u}^{h_{k+1}} - \widetilde{u}^{h_{k+1}}\|_1  =
\|S_{h_{k+1}}^{m_{k+1}}(\widehat{u}^{h_{k+1}} - u^{h_k})\|_1   \nonumber \\
&\le& \|S_{h_{k+1}}^{m_{k+1}}(\widehat{u}^{h_{k+1}} - \widetilde{u}_{h_k})\|_1
+ \|S_{h_{k+1}}^{m_{k+1}}(\widetilde{u}_{h_k} - u^{h_k})\|_1  \nonumber \\
&\le& \|S_{h_{k+1}}^{m_{k+1}}(\widehat{u}^{h_{k+1}}-\widehat{u}_{h_{k+1}})\|_1
+ \|S_{h_{k+1}}^{m_{k+1}}(\widehat{u}_{h_{k+1}} - \widetilde{u}_{h_{k}})\|_1
+ \|\widetilde{u}_{h_k} - u^{h_k}\|_1\nonumber \\
&\le& \|\widehat{u}_{h_{k+1}} - \widehat{u}^{h_{k+1}}\|_1 +
\frac{C}{m_{k+1}^{\alpha}}\frac{1}{h_{k+1}}\|\widehat{u}_{h_{k+1}}
- \widetilde{u}_{h_{k}}\|_0 + \|\widetilde{u}_{h_k} - u^{h_k}\|_1 \nonumber \\
&\le&\big(1+C\eta_a(V_H)\big)\|e_{h_k}\|_1
+ \frac{C}{m_{k+1}^{\alpha}}\frac{1}{h_{k+1}}\|\widehat{u}_{h_{k+1}}
- \widetilde{u}_{h_{k}}\|_0. \label{cascadic_convergence_break_3}
\end{eqnarray}
According to Lemma \ref{Err_Ground_Eigen_Lem}, (\ref{delta_recur_relation}),
Theorem \ref{super_approx_thm} and its proof, we have
\begin{eqnarray} \label{cascadic_convergence_break_4}
\|\widehat{u}_{h_{k+1}} - \widetilde{u}_{h_{k}}\|_0 &\le&
\|\widehat{u}_{h_{k+1}} - \bar{u}_{h_{k+1}}\|_0 +
\|\bar{u}_{h_{k+1}} - \bar{u}_{h_{k}}\|_0
+ \|\bar{u}_{h_{k}} - \widetilde{u}_{h_{k}}\|_0 \nonumber\\
&\le& C\eta_a(V_{h_{k+1}})\delta_{h_{k+1}}(u).
\end{eqnarray}
Combining (\ref{cascadic_convergence_break_1}), (\ref{cascadic_convergence_break_2}),
(\ref{cascadic_convergence_break_3}), (\ref{cascadic_convergence_break_4})
 and (\ref{Estimates_Eta_Delta_Hh}), we have
\begin{equation}\label{e_k_recur}
\|e_{h_{k+1}}\|_1 \le \big(1+C\eta_a(V_H)\big)\|e_{h_k}\|_1
+ \frac{C}{m_{k+1}^{\alpha}}\delta_{h_{k+1}}(u), \ \ k=1,\cdots,n-1.
\end{equation}
Based on (\ref{e_k_recur}), the fact $e_{h_1}=0$ and the recursive argument,
the following estimates hold
\begin{eqnarray*}
\|e_{h_n}\|_1 &\le& \big(1+C\eta_a(V_H)\big)\|e_{h_{n-1}}\|_1
+ \frac{C}{m_{n}^{\alpha}}\delta_{h_{n}}(u)  \\
&\le& \big(1+C\eta_a(V_H)\big)^2\|e_{h_{n-2}}\|_1
+ \big(1+C\eta_a(V_H)\big)\frac{C}{m_{n-1}^{\alpha}}\delta_{h_{n-1}}(u)
+\frac{C}{m_{n}^{\alpha}}\delta_{h_{n}}(u) \\
&\le& C\sum_{k=2}^n \big(1+C\eta_a(V_H)\big)^{n-k}\frac{1}{m_k^{\alpha}}\delta_{h_k}(u).
\end{eqnarray*}
This is the desired result (\ref{cascadic_convergence_eigenfunction}).
The estimate (\ref{cascadic_convergence_eigenvalue}) can be obtained
from Lemma \ref{Err_Ground_Eigen_Lem} and (\ref{cascadic_convergence_eigenfunction}).
\end{proof}

\begin{corollary}
Under the conditions of Theorem \ref{Cascadic_Convergence_Thm}, we have the following estimates:
\begin{eqnarray}
\|\bar{u}_{h_n} - u^{h_n}\|_1 &\le& C\Big( \eta_a(V_{h_n})\delta_{h_n}(u)+  \sum_{k=2}^n \frac{\big(1+C\eta_a(V_H)\big)^{n-k}}{m_k^{\alpha}}\delta_{h_k}(u)  \Big)\label{u_bar_u_h},\\
|\bar{\lambda}_{h_n} - \lambda^{h_n}| &\le& C\Big( \eta_a(V_{h_n})\delta_{h_n}(u)+  \sum_{k=2}^n \frac{\big(1+C\eta_a(V_H)\big)^{n-k}}{m_k^{\alpha}}\delta_{h_k}(u)  \Big)\label{lambda_bar_lambda_h}.
\end{eqnarray}
\end{corollary}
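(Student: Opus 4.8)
The plan is to derive both estimates by the triangle inequality, inserting the auxiliary approximation $(\widetilde{\lambda}_{h_n},\widetilde{u}_{h_n})$ produced by Algorithm \ref{Auxiliary_MCS_Alg} as an intermediate quantity, and then quoting the two theorems already established. No new estimate needs to be proved; the corollary is just the superposition of the superapproximate property (Theorem \ref{super_approx_thm}) and the algebraic error analysis (Theorem \ref{Cascadic_Convergence_Thm}).

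For the eigenfunction bound (\ref{u_bar_u_h}) I would write
\[
\|\bar{u}_{h_n}-u^{h_n}\|_1 \le \|\bar{u}_{h_n}-\widetilde{u}_{h_n}\|_1 + \|\widetilde{u}_{h_n}-u^{h_n}\|_1 .
\]
The first term on the right is controlled by (\ref{super_approx_eigenfunction}) of Theorem \ref{super_approx_thm}, giving $\|\bar{u}_{h_n}-\widetilde{u}_{h_n}\|_1 \le C\eta_a(V_{h_n})\delta_{h_n}(u)$; the second term is precisely the bound (\ref{cascadic_convergence_eigenfunction}) from Theorem \ref{Cascadic_Convergence_Thm}. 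Summing the two and collecting into a single generic constant yields (\ref{u_bar_u_h}). The eigenvalue bound (\ref{lambda_bar_lambda_h}) is obtained in the same way from
\[
|\bar{\lambda}_{h_n}-\lambda^{h_n}| \le |\bar{\lambda}_{h_n}-\widetilde{\lambda}_{h_n}| + |\widetilde{\lambda}_{h_n}-\lambda^{h_n}| ,
\]
where the first term is estimated by (\ref{super_approx_eigenvalue}) and the second by (\ref{cascadic_convergence_eigenvalue}). Since (\ref{cascadic_convergence_eigenvalue}) carries an extra prefactor $\eta_a(V_H)$, which by (\ref{Estimates_Eta_Delta_Hh}) satisfies $\eta_a(V_H)\approx H$ and is in any case bounded (indeed small under (\ref{fe_sequence_condition})), this factor is absorbed into $C$, and together with the $\eta_a(V_{h_n})\delta_{h_n}(u)$ contribution coming from $|\bar{\lambda}_{h_n}-\widetilde{\lambda}_{h_n}|$ one arrives at (\ref{lambda_bar_lambda_h}).

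Because the argument is entirely a triangle inequality plus two previously proved results, there is no genuine obstacle. The only point requiring care is the bookkeeping of the generic constants: one must ensure that the constant $C$ appearing inside $(1+C\eta_a(V_H))^{n-k}$ is kept consistent with the one in Theorem \ref{Cascadic_Convergence_Thm} (this is exactly why the statement exhibits that constant explicitly instead of hiding it), and that absorbing the $\eta_a(V_H)$ prefactor in the eigenvalue estimate is done with an $n$-independent constant so that the final bounds remain uniform in the number of levels.
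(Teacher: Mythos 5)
Your proposal is correct and is exactly the intended argument: the paper states this corollary without proof precisely because it follows from the triangle inequality through $(\widetilde{\lambda}_{h_n},\widetilde{u}_{h_n})$, combining (\ref{super_approx_eigenfunction}) and (\ref{super_approx_eigenvalue}) from Theorem \ref{super_approx_thm} with (\ref{cascadic_convergence_eigenfunction}) and (\ref{cascadic_convergence_eigenvalue}) from Theorem \ref{Cascadic_Convergence_Thm}, whose hypotheses subsume those of Theorem \ref{super_approx_thm}. Your remarks on absorbing the bounded factor $\eta_a(V_H)$ and on the constant bookkeeping are accurate and complete the argument.
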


\vspace{1ex}
Now we come to estimate the computational work for Algorithm \ref{Cascadic_MCS_Alg}.
 Define the dimension of each linear finite element space as
 \begin{equation*}
   N_k:=\text{\rm dim}~V_{h_k},\ \ \ \ k=1,\cdots,n.
 \end{equation*}
Then we have
\begin{equation}\label{N_h_recur_relation}
  N_k\approx\Big(\frac{h_k}{h_n}\Big)^{-d}N_n = \Big(\frac{1}{\beta}\Big)^{d(n-k)}N_n,\ \ \  \ k=1,\cdots,n.
\end{equation}
Different from the linear Laplace eigenvalue case, in the second step of
Algorithm \ref{Cascadic_MCS_Alg}, we have to solve a nonlinear
eigenvalue problem on the newly constructed coarse space
$V_H^{h_k}$. Always, some type of nonlinear iteration method is used
 to solve this nonlinear eigenvalue problem. In each nonlinear iteration step,
 we need to assemble the stiff matrix on the finite element space $V_H^{h_k}$ $(k=2,\cdots,n)$,
  which needs the computational work $\mathcal{O}(N_k)$. Fortunately,
   the matrix assembling can be carried out by the parallel way easily
   in the finite element space since it has no data transfer.

From Theorem \ref{Cascadic_Convergence_Thm}, in order to control the global error,
 it is required that the number of smoothing iterations in the coarser spaces
 should be larger than the fine spaces. To give a precise analysis for the final
 error and complexity estimates, we assume the following inequality holds for the
  number of smoothing iterations in each level mesh:
\begin{equation}\label{iter_reccur_relation}
\Big(\frac{h_k}{h_{n}}\Big)^{\zeta} \le \frac{m_k^{\alpha}}{\bar{m}^{\alpha}}
 \le \sigma \Big(\frac{h_k}{h_{n}}\Big)^{\zeta},\ \ \ \ \ \ \  k=2,\cdots,n-1,
\end{equation}
where $ \bar{m} = m_{n} $, $\sigma > 1$ and $\zeta >1$ are some appropriate constants.

Now, we give the final error and the complexity estimates for
Algorithm \ref{Cascadic_MCS_Alg}.
\begin{theorem}\label{estimate_number_iter}
Under the conditions  (\ref{delta_recur_relation}),
 (\ref{iter_reccur_relation}) and $\beta^{1-\zeta}(1+CH)<1$,
 for any given $\gamma \in (0,1]$, the final error estimate
\begin{eqnarray}\label{Final_Error_Estimate}
\|u^{h_{n}} - \widetilde{u}_{h_{n}}\|_1 \le \gamma h_{n}
\end{eqnarray}
holds if we take
\begin{eqnarray}\label{Condition_m_bar}
\bar{m}>\Big(\frac{CC_{\zeta}}{\gamma}\Big)^{\frac{1}{\alpha}},
\end{eqnarray}
where $C_\zeta = {1}/{(1-\beta^{1-\zeta}(1+CH))}$.

Assume the GPE problem solved in the coarse spaces $V_{H}$ and $V_{h_1}$ need work
$M_H$ and $M_{h_1}$, respectively. We use $P$
 computing-nodes in Algorithm \ref{Cascadic_MCS_Alg},
 and let $\varpi$ denote the nonlinear iteration times
 when we solve the nonlinear eigenvalue
 problem (\ref{cascadic_correct_eig_exact}).
If $\zeta/\alpha <d$, the total computational work of
 Algorithm \ref{Cascadic_MCS_Alg} can be bounded by
$\mathcal{O}\Big(\big(1+\frac{\varpi}{p}\big)N_{n}+M_{h_1}+M_H\log(N_{n})\Big)$
 and furthermore $\mathcal{O}(N_{n})$
provided  $M_H \ll N_{n}$, $M_{h_1}\leq N_{n}$ and $\frac{\varpi}{p}\le C$.
While if $\zeta/\alpha =d$, the total computational work can be bounded by
$\mathcal{O}((1+\varpi/p)N_{n}\log(N_{n})+M_{h_1}+M_H\log(N_{n}))$
and furthermore $\mathcal{O}(N_{n}\log(N_{n}))$
provided  $M_H \ll N_{n}$, $M_{h_1}\leq N_{n}$ and $\varpi/p\leq C$.
\end{theorem}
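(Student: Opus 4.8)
The plan is to combine the error estimate from Theorem~\ref{Cascadic_Convergence_Thm} with the work-accounting argument familiar from cascadic multigrid analysis. I would split the statement into two independent parts: first establishing the final accuracy bound (\ref{Final_Error_Estimate}) under (\ref{Condition_m_bar}), then estimating the total computational work.

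For the accuracy part, I would start from (\ref{cascadic_convergence_eigenfunction}), which gives
$\|\widetilde{u}_{h_n}-u^{h_n}\|_1 \le C\sum_{k=2}^n (1+C\eta_a(V_H))^{n-k} m_k^{-\alpha}\delta_{h_k}(u)$.
Using $\eta_a(V_H)\approx H$ from (\ref{Estimates_Eta_Delta_Hh}) and $\delta_{h_k}(u)\approx h_k$, together with the lower bound in (\ref{iter_reccur_relation}), which gives $m_k^{-\alpha}\le \bar m^{-\alpha}(h_k/h_n)^{-\zeta}$, each summand is bounded by $C\bar m^{-\alpha}(1+CH)^{n-k}(h_k/h_n)^{-\zeta}h_k$. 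Writing $h_k/h_n=\beta^{n-k}$ from (\ref{mesh_size_recur}), this becomes $C\bar m^{-\alpha}h_n \big(\beta^{1-\zeta}(1+CH)\big)^{n-k}$. Summing the geometric series in $n-k$, whose ratio $\beta^{1-\zeta}(1+CH)<1$ by hypothesis, yields $\|\widetilde{u}_{h_n}-u^{h_n}\|_1 \le C\,C_\zeta\,\bar m^{-\alpha} h_n$ with $C_\zeta = 1/(1-\beta^{1-\zeta}(1+CH))$. Requiring $C\,C_\zeta\,\bar m^{-\alpha}\le\gamma$ gives exactly condition (\ref{Condition_m_bar}), and (\ref{Final_Error_Estimate}) follows.

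For the complexity part, I would account for the work level by level. On level $k$ the dominant costs are: the $m_k$ smoothing sweeps on $V_{h_k}$, each $\mathcal{O}(N_k)$; the assembly of the stiffness matrix on $V_H^{h_k}$ in each of the $\varpi$ nonlinear iterations, which is $\mathcal{O}(N_k)$ per iteration but parallelizable over $P$ nodes, contributing $\mathcal{O}(\varpi N_k/P)$; and the solution of the low-dimensional nonlinear eigenvalue problem on $V_H^{h_k}$, whose dimension is $\dim V_H + 1$, costing $\mathcal{O}(M_H)$ per level and hence $\mathcal{O}(M_H\log N_n)$ over the $\mathcal{O}(\log N_n)$ levels. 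The smoothing work on level $k$ is $\mathcal{O}(m_k N_k)$; using $m_k^\alpha \le \sigma\bar m^\alpha(h_k/h_n)^\zeta$ so $m_k \le C\bar m\,\beta^{-\zeta(n-k)/\alpha}$, and $N_k = \beta^{-d(n-k)}N_n$ from (\ref{N_h_recur_relation}), gives $m_k N_k \le C\bar m N_n \beta^{-(d-\zeta/\alpha)(n-k)}$. When $\zeta/\alpha<d$ the exponent $d-\zeta/\alpha>0$, so summing over $k$ gives a convergent geometric series and total smoothing work $\mathcal{O}(\bar m N_n)=\mathcal{O}(N_n)$; when $\zeta/\alpha=d$ each level contributes $\mathcal{O}(\bar m N_n)$ and there are $\mathcal{O}(\log N_n)$ levels, giving $\mathcal{O}(N_n\log N_n)$. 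Adding the $M_{h_1}$ cost of the initial coarse solve, the $\mathcal{O}(\varpi N_n/P)$ assembly cost (same geometric-sum argument), and the $\mathcal{O}(M_H\log N_n)$ coarse eigenproblem cost, and then invoking $M_H\ll N_n$, $M_{h_1}\le N_n$, $\varpi/P\le C$ produces the stated bounds $\mathcal{O}(N_n)$ and $\mathcal{O}(N_n\log N_n)$ respectively.

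The main obstacle is bookkeeping rather than deep mathematics: one must be careful that the geometric series in the accuracy estimate and the one in the work estimate are controlled by \emph{different} hypotheses — $\beta^{1-\zeta}(1+CH)<1$ for the former and $\zeta/\alpha\le d$ for the latter — and that the $\varpi$-dependent assembly term is genuinely parallelizable (the paper notes it has no data transfer). A secondary subtlety is that (\ref{iter_reccur_relation}) only constrains $m_k$ for $k=2,\dots,n-1$, so the endpoint $k=n$ (where $m_n=\bar m$) and $k=1$ (handled by $M_{h_1}$) must be treated separately; since these contribute only $\mathcal{O}(\bar m N_n)$ and $\mathcal{O}(M_{h_1})$ respectively, they are absorbed without trouble.
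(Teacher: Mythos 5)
Your proposal is correct and follows essentially the same route as the paper: the accuracy bound comes from inserting $m_k^{-\alpha}\le \bar m^{-\alpha}(h_k/h_n)^{-\zeta}$ and $\delta_{h_k}(u)\approx h_k=\beta^{n-k}h_n$ into (\ref{cascadic_convergence_eigenfunction}) and summing the geometric series with ratio $\beta^{1-\zeta}(1+CH)$, while the work estimate sums $m_kN_k\lesssim \bar m N_n\beta^{-(d-\zeta/\alpha)(n-k)}$ over levels together with the $\varpi N_k/p$ assembly cost, the $M_H\log N_n$ coarse eigenproblem cost, and $M_{h_1}$. The only blemish is a sign typo in the intermediate bound $m_k\le C\bar m\beta^{-\zeta(n-k)/\alpha}$ (the exponent should be $+\zeta(n-k)/\alpha$), which your subsequent, correct expression for $m_kN_k$ shows is merely typographical.
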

\begin{proof}
By Theorem \ref{Cascadic_Convergence_Thm}, together
with (\ref{mesh_size_recur}), (\ref{Estimates_Eta_Delta_Hh}),
(\ref{cascadic_convergence_eigenfunction}), (\ref{iter_reccur_relation})
and $\beta^{1-\zeta}(1+CH)<1$, we have the following estimates
\begin{eqnarray}\label{m_k_cascadic_convergence}
\|u^{h_{n}} - \widetilde{u}_{h_{n}}\|_1 &\leq& C \sum_{k=2}^{n}(1+C\eta_a(V_H))^{n-k}
\frac{1}{m_k^{\alpha}}\delta_{h_k}(u) \nonumber\\
&\le& C \sum_{k=2}^{n}(1+CH)^{n -k} \frac{1}{\bar{m}^{\alpha}}
 \Big(\frac{h_k}{h_{n}}\Big)^{-\zeta}h_k \nonumber\\
&\le& C\sum_{k=2}^{n} (1+CH)^{n -k}
\beta^{(n-k)(1-\zeta)}\frac{h_{n}}{\bar{m}^{\alpha}} \nonumber\\
&=& C\frac{h_{n}}{\bar{m}^{\alpha}}\sum_{k=0}^{n-2}
 \big(\beta^{1-\zeta}(1+CH)\big)^{k}\nonumber\\
&\leq& C\frac{h_{n}}{\bar{m}^{\alpha}}\frac{1}{1-\beta^{1-\zeta}(1+CH)}\nonumber\\
&\le& \frac{CC_{\zeta}}{\bar{m}^{\alpha}}h_{n}.
\end{eqnarray}
 Then it is obvious that we can obtain
$\|u^{h_{n}} - \widetilde{u}_{h_{n}}\|_1 \le \gamma h_{n}$ when $\bar{m}$ satisfies the condition
(\ref{Condition_m_bar}).

Let $W$ denote the whole computational work of Algorithm \ref{Cascadic_MCS_Alg},
$w_k$ the work on the $k$-th level for $k=1,\cdots,n$.
Based on the definition of
Algorithms \ref{Smooth_Correction_Alg} and \ref{Cascadic_MCS_Alg},
(\ref{mesh_size_recur}), (\ref{N_h_recur_relation}) and (\ref{iter_reccur_relation}),
 the following estimates hold
\begin{eqnarray*}
W &=& \sum_{k=1}^{n} w_k \le M_{h_1} + \sum_{k=2}^{n} m_k N_k + \sum_{k=2}^{n} \frac{\varpi}{p} N_k
+ M_H \log_{\beta}(N_{n}) \\
&\leq& M_{h_1} + C M_H\log(N_{n})+ \bar{m}\sigma^{1/\alpha}
N_{n}\sum_{k=2}^{n} \Big(\frac{1}{\beta}\Big)^{(n-k)(d-\zeta/\alpha)}\nonumber\\
&&\ \ \ \ + \frac{\varpi}{p}
N_{n}\sum_{k=2}^{n} \Big(\frac{1}{\beta}\Big)^{d(n-k)} \\
&\leq& M_{h_1} + C M_H\log(N_{n})+C \frac{\varpi}{p} N_{n}+ \bar{m}\sigma^{1/\alpha}
N_{n}\sum_{k=2}^{n} \Big(\frac{1}{\beta}\Big)^{(n-k)(d-\zeta/\alpha)}.
\end{eqnarray*}
Then we know that the computational work $W$ can be bounded by
$\mathcal{O}\big(M_{h_1}+M_H\log(N_{n})+\big(1+\varpi/p\big)N_{n}\big)$
when $d-\zeta/\alpha >0$ and by
$\mathcal{O}(M_{h_1}+M_H\log(N_{n})+(1+\varpi/p)N_{n}\log(N_{n}))$ when
$d-\zeta/\alpha =0$.  It is also obvious they can be bounded by $\mathcal{O}(N_{n})$ and
$\mathcal{O}(N_{n}\log(N_{n}))$, respectively, if $M_H \ll N_{n}$,  $M_{h_1}\le N_{n}$
and $\varpi/p\leq C$ are provided.
\end{proof}

\begin{remark}
Since we have a good enough initial solution $\widetilde{u}^{h_{k+1}}$
in the second step of Algorithm \ref{Cascadic_MCS_Alg},
solving the nonlinear eigenvalue problem (\ref{cascadic_correct_eig_exact})
always dose not need many nonlinear iteration times (always $\varpi \le 3$).
\end{remark}

\begin{corollary}\label{Error_Estimate_Final_Corollary}
Under the same conditions of Theorem \ref{estimate_number_iter}
and (\ref{Condition_m_bar}), if $Ch_n\le\gamma$,
we have the following estimate
\begin{equation}\label{final_error_u_bar}
\|u^{h_n}-\bar{u}_{h_n}\|_1 \le 2\gamma h_n.
\end{equation}
\end{corollary}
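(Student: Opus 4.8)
The plan is to combine the two estimates already in hand, namely the superapproximate property of Theorem \ref{super_approx_thm} applied at the finest level $k=n$, and the algebraic-error bound (\ref{cascadic_convergence_eigenfunction}) from Theorem \ref{Cascadic_Convergence_Thm}, via a triangle inequality. Writing
\begin{equation*}
\|u^{h_n}-\bar{u}_{h_n}\|_1 \le \|u^{h_n}-\widetilde{u}_{h_n}\|_1 + \|\widetilde{u}_{h_n}-\bar{u}_{h_n}\|_1,
\end{equation*}
I would bound the first term by $\gamma h_n$ using (\ref{Final_Error_Estimate}), which holds under the standing hypotheses of Theorem \ref{estimate_number_iter} together with the choice (\ref{Condition_m_bar}) of $\bar m$. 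The second term is controlled by Theorem \ref{super_approx_thm}, which gives $\|\widetilde{u}_{h_n}-\bar{u}_{h_n}\|_1 \le C\eta_a(V_{h_n})\delta_{h_n}(u)$; invoking (\ref{Estimates_Eta_Delta_Hh}) to replace $\eta_a(V_{h_n})$ and $\delta_{h_n}(u)$ by quantities $\approx h_n$, this is $\le Ch_n^2 \le C h_n \cdot h_n$. Since $h_n \le 1$ for $n$ large (and in any case $\delta_{h_n}(u)\approx h_n$ so $C\eta_a(V_{h_n})\delta_{h_n}(u)\le Ch_n$ after absorbing one factor), the hypothesis $Ch_n\le\gamma$ — interpreting the constant $C$ here as the one appearing in this superapproximation bound — yields $\|\widetilde{u}_{h_n}-\bar{u}_{h_n}\|_1 \le \gamma h_n$.

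Adding the two contributions gives $\|u^{h_n}-\bar{u}_{h_n}\|_1 \le \gamma h_n + \gamma h_n = 2\gamma h_n$, which is exactly (\ref{final_error_u_bar}). So the proof is essentially a two-line assembly: cite Theorem \ref{estimate_number_iter} for one half, Theorem \ref{super_approx_thm} (rewritten through (\ref{Estimates_Eta_Delta_Hh})) for the other half, and add.

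The only genuine subtlety — and the one place I would be careful — is bookkeeping of the generic constant $C$. The statement $Ch_n\le\gamma$ must be read as referring specifically to the constant produced when one simplifies the superapproximate bound $\|\bar{u}_{h_n}-\widetilde{u}_{h_n}\|_1\le C\eta_a(V_{h_n})\delta_{h_n}(u)$ down to the form $(\text{const})\cdot h_n\cdot h_n$ and then drops the extra $h_n\le 1$; one should make this identification explicit so that "$Ch_n\le\gamma$" genuinely forces $\|\bar{u}_{h_n}-\widetilde{u}_{h_n}\|_1\le\gamma h_n$. There is no new analytic content beyond that, so I do not anticipate any real obstacle; the main risk is merely stating the constant-tracking cleanly enough that the factor $2$ in (\ref{final_error_u_bar}) is transparently accounted for.
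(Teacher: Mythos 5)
Your argument is correct and is exactly the intended one: the paper states this corollary without proof, and the evident route is the triangle inequality $\|u^{h_n}-\bar{u}_{h_n}\|_1 \le \|u^{h_n}-\widetilde{u}_{h_n}\|_1 + \|\widetilde{u}_{h_n}-\bar{u}_{h_n}\|_1$, bounding the first term by $\gamma h_n$ via (\ref{Final_Error_Estimate}) and the second by $C\eta_a(V_{h_n})\delta_{h_n}(u)\le (Ch_n)h_n\le\gamma h_n$ via Theorem \ref{super_approx_thm}, (\ref{Estimates_Eta_Delta_Hh}) and the hypothesis $Ch_n\le\gamma$. Your remark about reading the generic constant $C$ in the hypothesis as the one from the superapproximation bound is the right (and only) point of care.
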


\vspace{1ex}
If we choose the conjugate gradient method as the smoothing operator, then $\alpha = 1$
and the computational work of Algorithm \ref{Cascadic_MCS_Alg} can be bounded by
$\mathcal{O}\big((1+\varpi/p)N_{n}+M_{h_1}+M_H\log(N_{n})\big)$ or $\mathcal{O}(N_{n})$
provided $M_H \ll N_{n}$, $M_{h_1}\leq N_{n}$ and $\varpi/p\leq C$ for both
 $d=2$ and $d=3$ when we choose $1 < \zeta < d$.

When the symmetric Gauss-Seidel, the SSOR, the damped Jacobi or the Richardson iteration acts as
the smoothing  operator,  we know $\alpha=1/2$. Then  the computational work of
Algorithm \ref{Cascadic_MCS_Alg} can be bounded by
 $\mathcal{O}\big((1+\varpi/p)N_{n}+M_{h_1}+M_H\log(N_{n})\big)$ ($\mathcal{O}(N_{n})$
provided $M_H \ll N_{n}$, $M_{h_1}\leq N_{n}$ and $\varpi/p\leq C$)
only for $d=3$  when we choose $1 < \zeta < 3/2$.
In the case  of $\alpha=1/2$  and $d=2$,  from Theorem \ref{estimate_number_iter}
and its proof, we can only choose $\zeta=1$ and then the final error has the estimate
$\|u^{h_{n}}-\bar{u}_{h_{n}}\|_1 \leq Ch_{n}(1+CH)^{|\log(h_{n})|}$
and the computational work can only be bounded by
$\mathcal{O}\big((1+\varpi/p)N_{n}\log(N_{n})+M_{h_1}+M_H\log(N_{n})\big)$
($\mathcal{O}(N_{n}\log(N_{n}))$ provided $M_H \ll N_{n}$, $M_{h_1}\leq N_{n}$
and $\varpi/p\leq C$).

\section{Numerical exapmle}
In this section, we give a numerical example to illustrate the
efficiency of the cascadic multigrid scheme (Algorithm \ref{Cascadic_MCS_Alg})
 proposed in this paper. Here, we choose the conjugate-gradient iteration as the
smoothing operator ($\alpha=1$) and the number of iteration steps by
\begin{eqnarray*}
m_k = \lceil\bar{m}\sigma\beta^{\zeta(n-k)}\rceil\ \ \ {\rm for}\  k=2, \cdots, n
\end{eqnarray*}
with $\bar{m}=2$, $\sigma=2$, $\beta=2$, $\zeta=1.8$ and $\lceil r\rceil$
denoting the smallest integer which is not less than $r$

Here we give the numerical results of the cascadic multigrid
scheme for GPE problem on the two dimensional domain
$\Omega=(0,1 )\times (0, 1)$ with $W = x_1^2 + x_2^2$ and $\zeta=1$.  The sequence of
finite element spaces are constructed by
using linear element on the series of meshes which are produced by the 
regular refinement with $\beta =2$ (connecting the midpoints of each edge).
In this example, we use two meshes which are generated by Delaunay method as the initial mesh
$\mathcal{T}_{h_1}$ and set $\mathcal{T}_H=\mathcal{T}_{h_1}$
to investigate the convergence behaviors.
Figure \ref{Initial_Mesh_Delaunay} shows the corresponding
initial meshes: one is coarse and the other is fine.

Algorithm \ref{Cascadic_MCS_Alg} is applied to solve the GPE problem.
For comparison, we also solve the GPE problem by the direct finite element method.
From the error estimate result of GPEs by the finite element method, we have
\begin{equation*}
\delta_h(u)\approx h, \ \ \  \eta_a(V_h)\approx h.
\end{equation*}
Then from Corollary \ref{Error_Estimate_Final_Corollary}, the following estimates hold
\begin{equation*}
\|\bar{u}_{h_n} - u^{h_n}\|_1 \le Ch_n, \ \ \|\bar{u}_{h_n} - u^{h_n}\|_0
\le CHh_n, \ \ |\bar{\lambda}_{h_n} - \lambda^{h_n}| \le CHh_n.
\end{equation*}
We consider the Delaunay meshes (see Figure \ref{Initial_Mesh_Delaunay}).
\begin{figure}[htb]
\centering
\includegraphics[width=5cm,height=5cm]{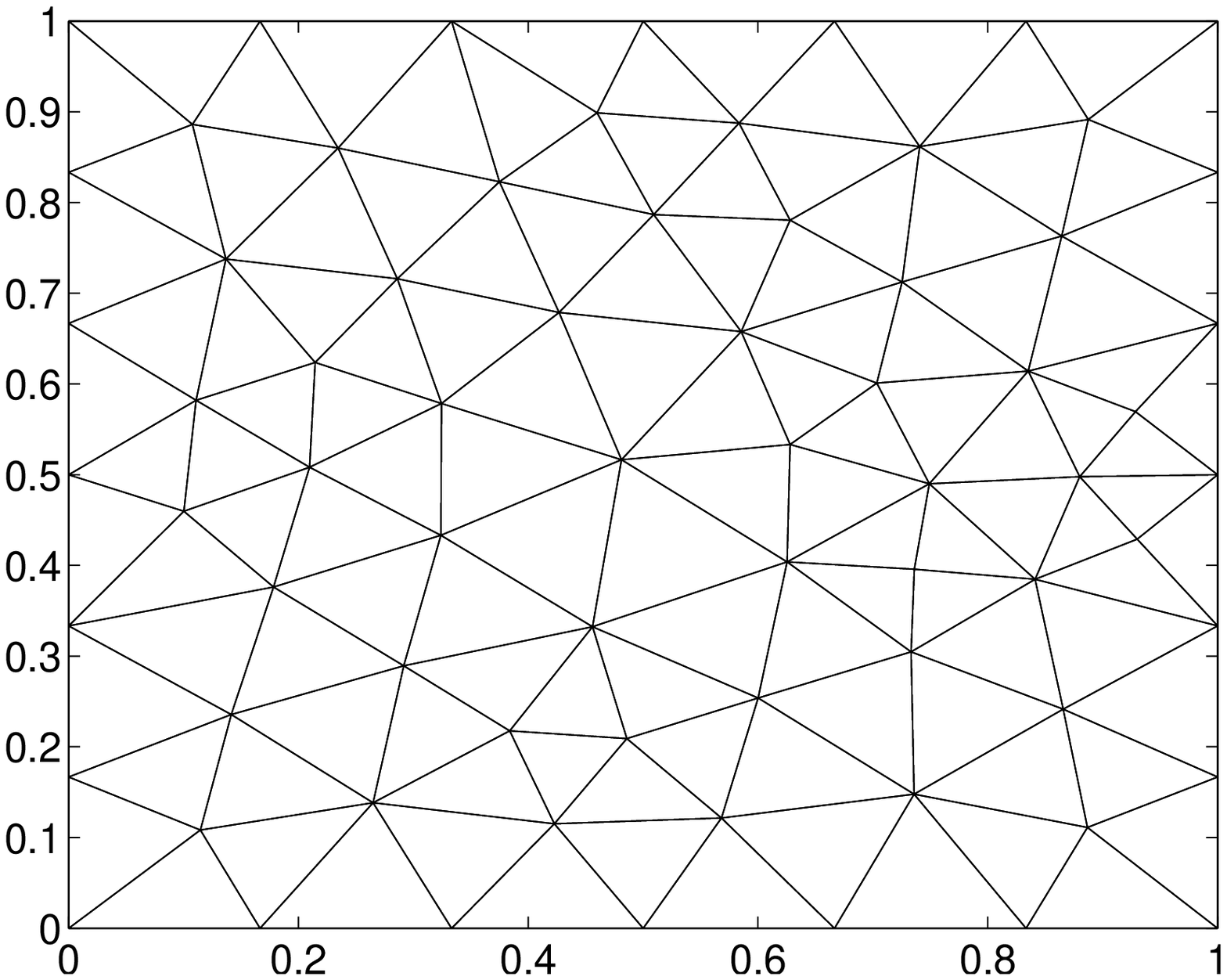}\ \ \
\includegraphics[width=5cm,height=5cm]{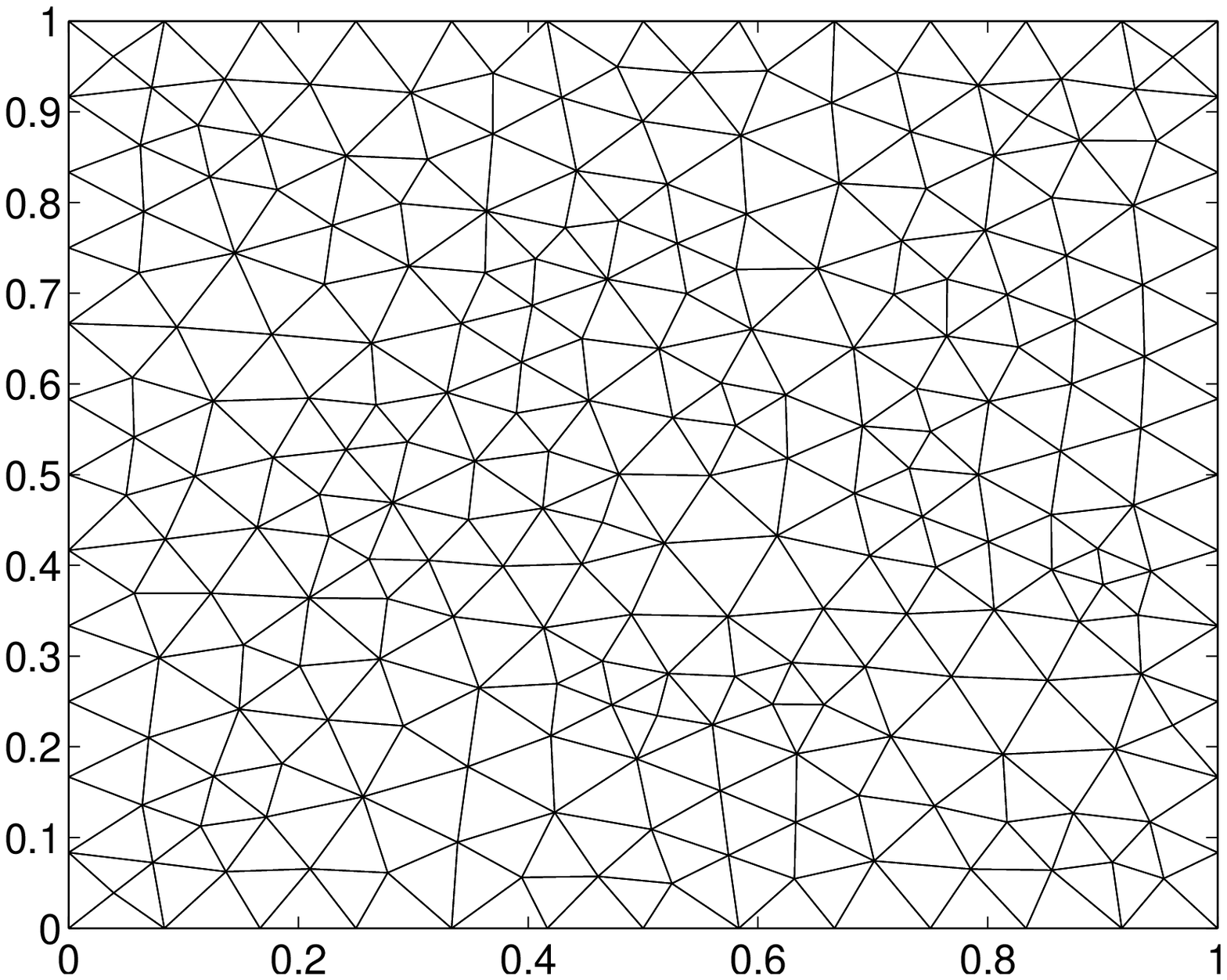}
\caption{\small\texttt The coarse and fine initial
meshes for the unit square (left: H=1/6 and right: H=1/12)}
\label{Initial_Mesh_Delaunay}
\end{figure}

Figure \ref{numerical_multi_grid_2D_Coarse_InitialMesh_Delaunay}
gives the corresponding numerical results for the GPE problem on the initial mesh
 illustrated by the left mesh in Figure \ref{Initial_Mesh_Delaunay}.
The corresponding numerical results for the GPE problem on the initial mesh
 illustrated by the right mesh in Figure \ref{Initial_Mesh_Delaunay}
 are shown in Figure \ref{numerical_multi_grid_2D_Fine_InitialMesh_Delaunay}.
\begin{figure}[htb]
\centering
\includegraphics[width=6cm,height=5cm]{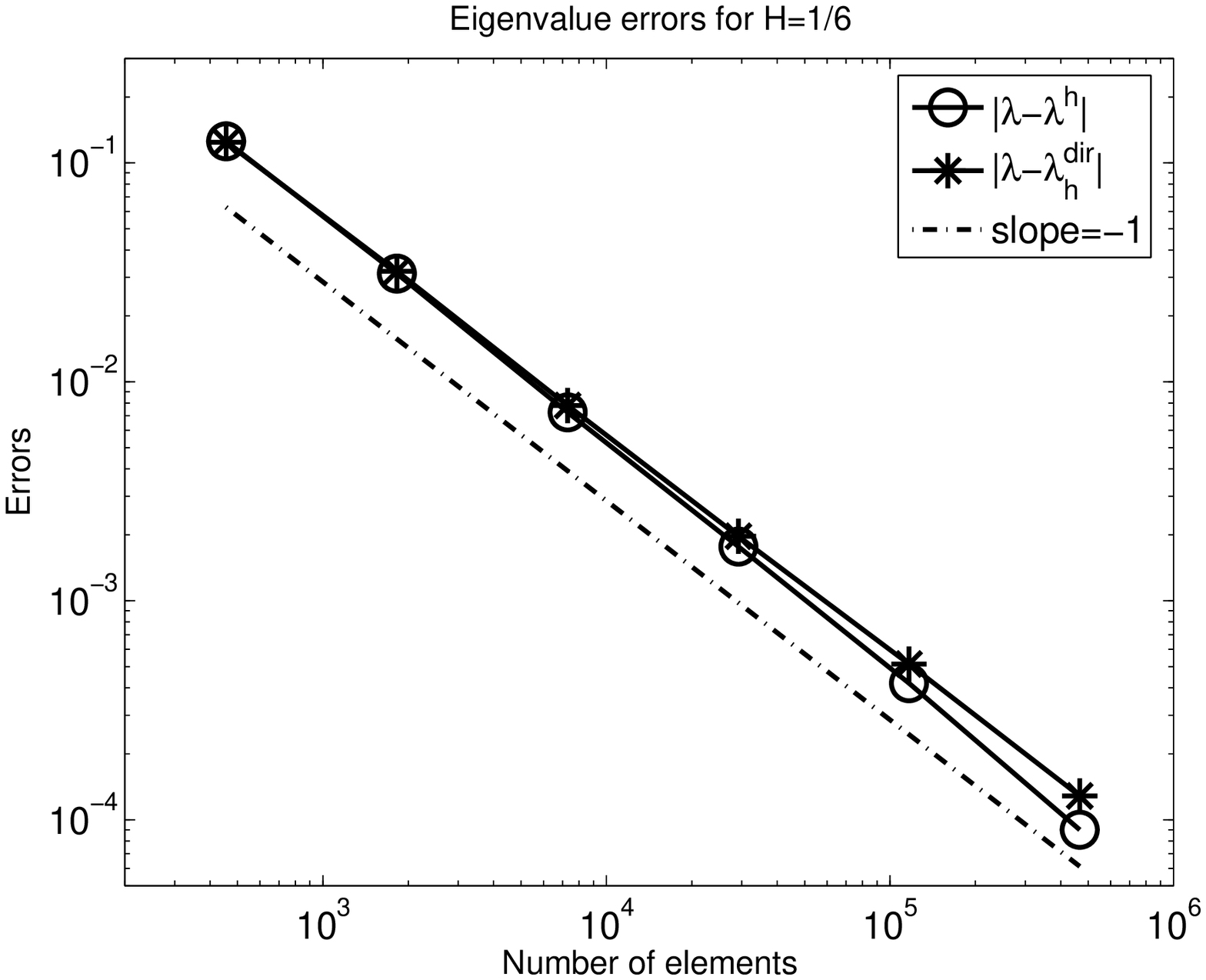}
\includegraphics[width=6cm,height=5cm]{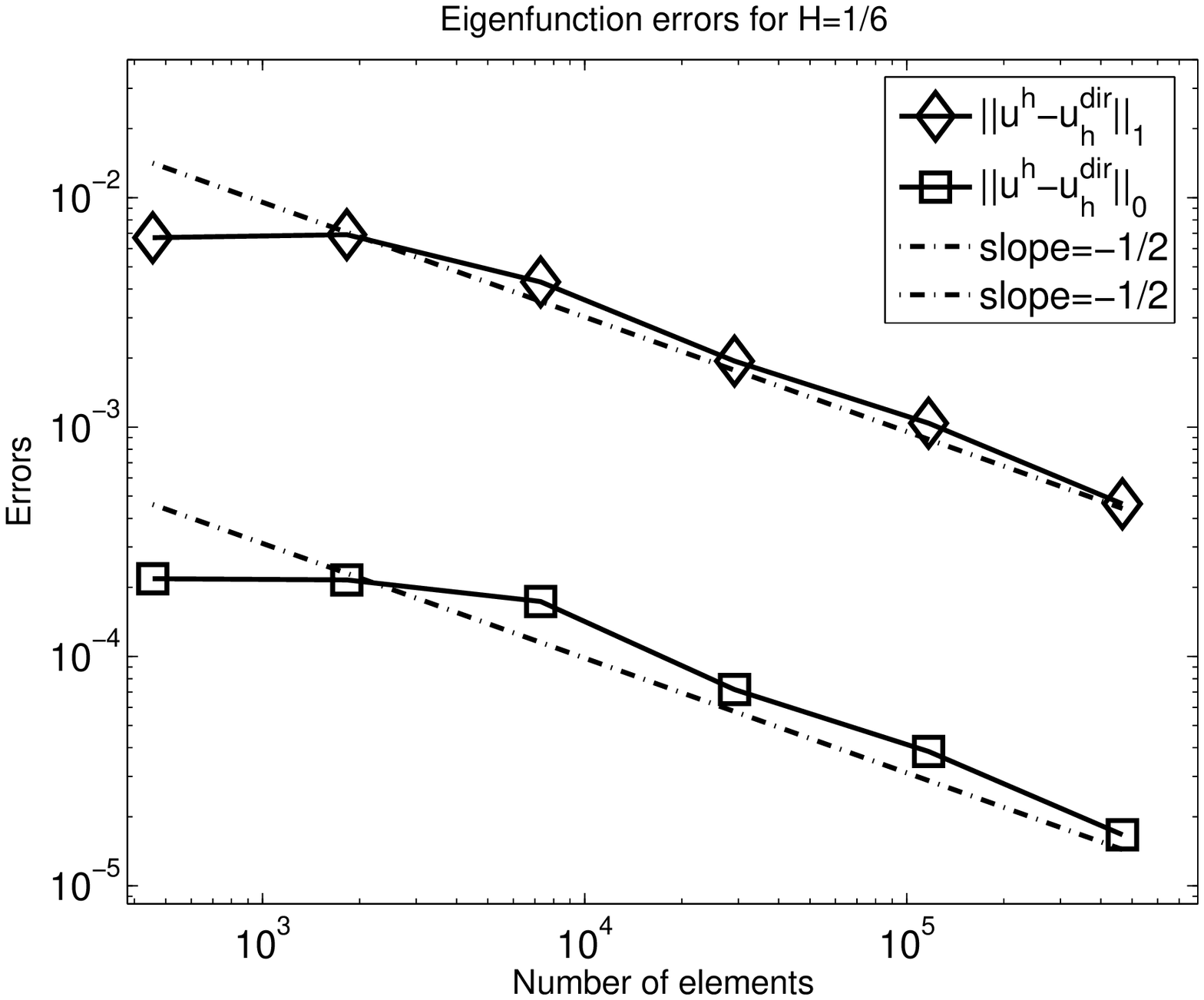}
\caption{\small\texttt The errors of the cascadic multigrid
algorithm for the GPE problem,
where $u^h$ and $\lambda^h$ denote the eigenfunction and eigenvalue
approximation by Algorithm \ref{Cascadic_MCS_Alg}, and $u_h^{\rm dir}$
and $\lambda_h^{\rm dir}$ denote the eigenfunction
and eigenvalue approximation by direct eigenvalue solving (The left
figure is the eigenvalue errors and the right figure is the eigenfunction errors which both
correspond to the left mesh in Figure \ref{Initial_Mesh_Delaunay}) }
\label{numerical_multi_grid_2D_Coarse_InitialMesh_Delaunay}
\end{figure}

\begin{figure}[htb]
\centering
\includegraphics[width=6cm,height=5cm]{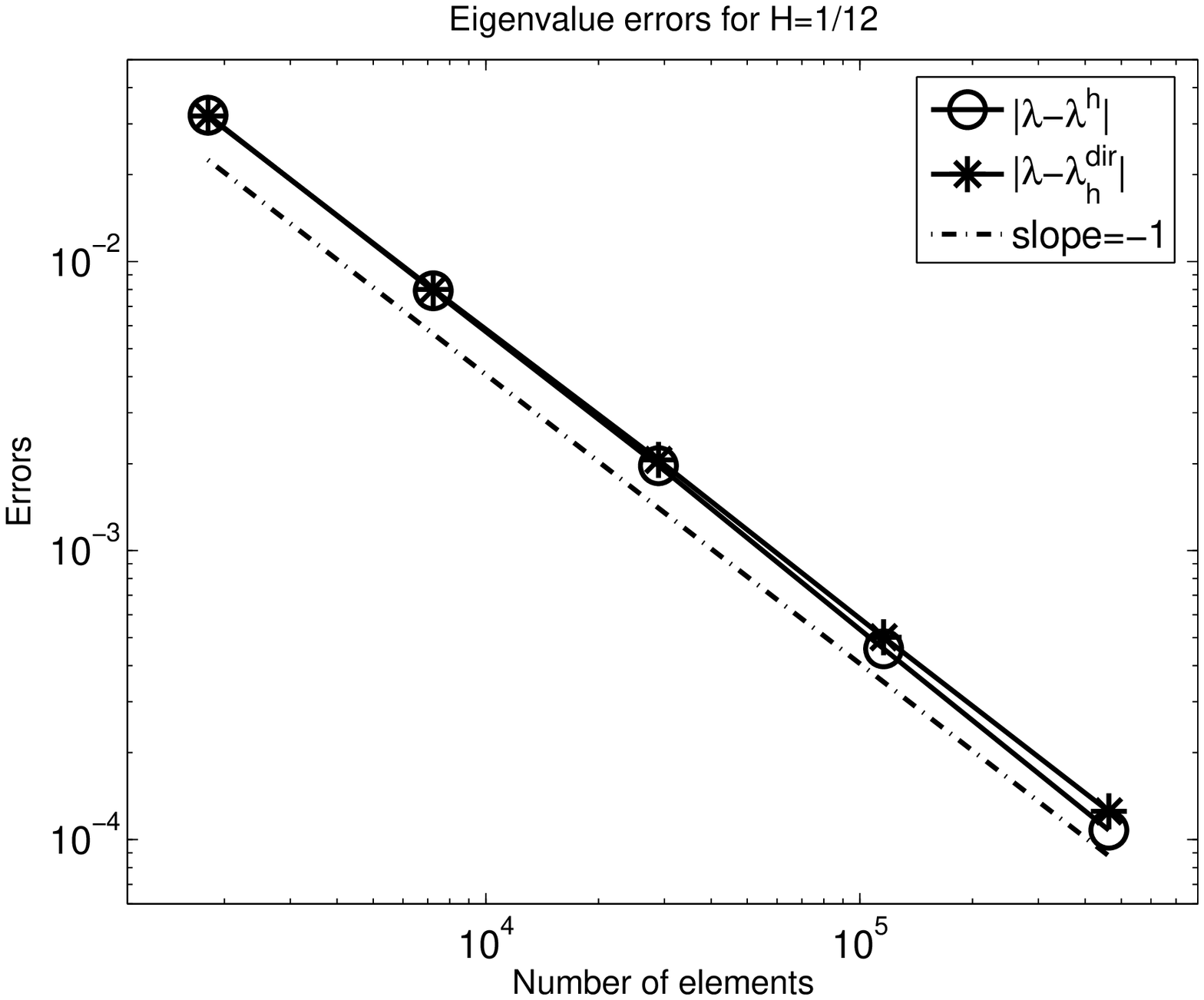}
\includegraphics[width=6cm,height=5cm]{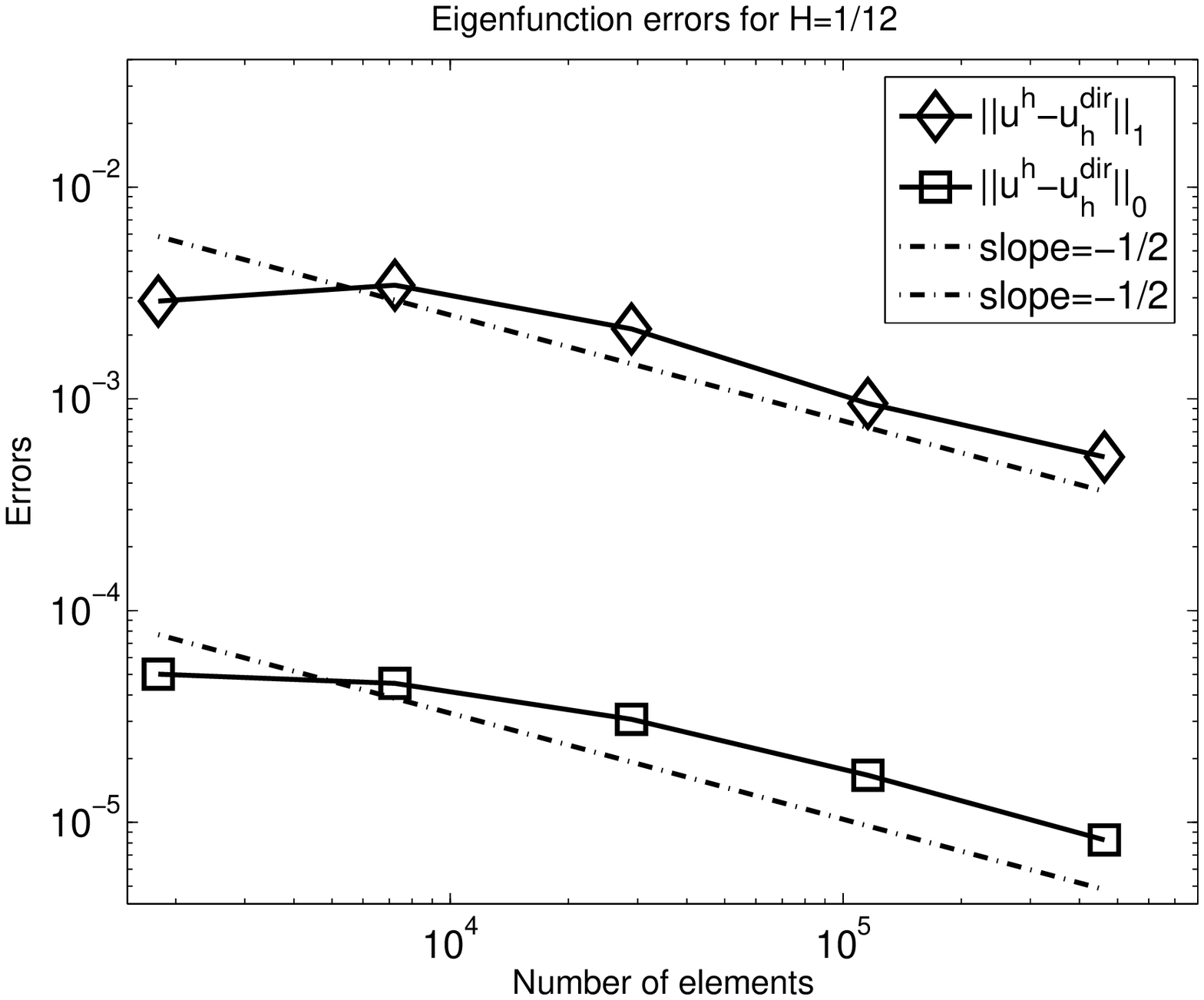}
\caption{\small\texttt The errors of the cascadic multigrid
algorithm for the GPE problem,
where $u^h$ and $\lambda^h$ denote the eigenfunction and eigenvalue
approximation by Algorithm \ref{Cascadic_MCS_Alg}, and $u_h^{\rm dir}$
and $\lambda_h^{\rm dir}$ denote the eigenfunction
and eigenvalue approximation by direct eigenvalue solving (The left
figure is the eigenvalue errors and the right figure is the eigenfunction errors which both
correspond to the right mesh in Figure \ref{Initial_Mesh_Delaunay}) }
\label{numerical_multi_grid_2D_Fine_InitialMesh_Delaunay}
\end{figure}

From Figures \ref{numerical_multi_grid_2D_Coarse_InitialMesh_Delaunay}
 and \ref{numerical_multi_grid_2D_Fine_InitialMesh_Delaunay},
we find the cascadic multigrid scheme can obtain
the same optimal error estimates as the direct eigenvalue
solving method for the eigenfunction approximations in the $H^1$-norm.

\begin{remark}
Note that by (\ref{lambda_bar_lambda_h}) and (\ref{final_error_u_bar}),
we {\bf do not} prove the optimal convergence rate for eigenvalue
error (i.e. $\big|\bar{\lambda}_{h_n} - \lambda^{h_n}\big|\le Ch_n^2$).
However, it is shown in the left of
Figures \ref{numerical_multi_grid_2D_Coarse_InitialMesh_Delaunay}
and \ref{numerical_multi_grid_2D_Fine_InitialMesh_Delaunay} that
$\big|\bar{\lambda}_{h_n} - \lambda^{h_n}\big|\le Ch_n^2$.
\end{remark}

\section{Concluding remarks}

In this paper, we present a type of cascadic multigrid method for GPE problem
based on the combination of the cascadic multigrid for boundary
value problems and the multilevel correction
scheme for eigenvalue problems. The optimality of the computational efficiency
has been demonstrated by theoretical analysis and numerical examples.

\bibliographystyle{plain}

\end{document}